\documentclass[12pt]{article}
\usepackage{amsmath,amssymb,amsthm}
\usepackage{fancyvrb}
\usepackage{bbold}

\usepackage[bottom,multiple]{footmisc}

\makeatletter
\let\@@pmod\pmod
\DeclareRobustCommand{\pmod}{\@ifstar\@pmods\@@pmod}
\def\@pmods#1{\mkern4mu({\operator@font mod}\mkern 6mu#1)}
\makeatother

\newtheorem{theorem}{Theorem}[section]
\newtheorem{lemma}{Lemma}[section]
\newtheorem{corollary}{Corollary}[section]

\begin{document}

\title{Simultaneous Cubic and Quadratic Diagonal Equations In 12 Prime Variables}

\author{Alan Talmage\footnote{abt5217@psu.edu \newline Department of Mathematics, Pennsylvania State University, State College, PA, USA \newline ORCID ID: 0000-0002-6025-8092}}

\maketitle

\begin{abstract}
The system of equations 
\[
		u_1p_1^2 + \ldots + u_sp_s^2 = 0
\]
\[
		v_1p_1^3 + \ldots + v_sp_s^3 = 0
\]
has prime solutions $(p_1, \ldots, p_s)$ for $s \geq 12$, assuming that the system has solutions modulo each prime $p$.  This is proved via the Hardy-Littlewood circle method, building on Wooley's work on the corresponding system over the integers and recent results on Vinogradov's mean value theorem.  Additionally, a set of sufficient conditions for local solvability is given: If both equations are solvable modulo 2, the quadratic equation is solvable modulo 3, and for each prime $p$ at least 7 of each of the $u_i$, $v_i$ are not zero modulo $p$, then the system has solutions modulo each prime $p$.
\par\noindent
\textit{Keywords: } Diophantine equations, Hardy-Littlewood circle method, Waring-Goldbach problem, diagonal forms
\end{abstract}

\section{Introduction}

Much work has been done in applying the Hardy-Littlewood circle method to find integral solutions to systems of simultaneous equations (see~\cite{bruderncook},~\cite{cook},~\cite{wooleySAEi}, and~\cite{wooleyDiagEq} for examples).  In particular, recent progress on Vinogradov's mean value theorem (see~\cite{bdg},~\cite{wooleyCubicCase}) has enabled progress on questions of this type.  Here we consider the question of solving systems of equations with prime variables, generalizing the Waring-Goldbach problem in the same way existing work on integral solutions of systems of equations generalizes Waring's problem.  Following Wooley~\cite{wooleyDiagEq}, we address here the simplest nontrivial case: one quadratic equation and one cubic equation.  We find that under suitable local conditions, 12 variables will suffice for us to establish an eventually positive asymptotic formula guaranteeing solutions to the system of equations.

Consider a pair of equations of the form
\begin{equation} \label{eq:system}
	\begin{split}
		u_1p_1^2 + \ldots + u_sp_s^2 = 0\\
		v_1p_1^3 + \ldots + v_sp_s^3 = 0
	\end{split}
\end{equation}
where $u_1, \ldots, u_s, v_1, \ldots, v_s$ are nonzero integer constants and $p_1, \ldots, p_s$ are variables restricted to prime values.  We seek to prove the following theorem:

\begin{theorem}\label{mainThm} If

\begin{enumerate}

\item the system~(\ref{eq:system}) has a nontrivial real solution,

\item $s \geq 12$, and

\item for every prime $p$, the corresponding local system
\begin{equation} \label{localSystem}
	\begin{split}
		u_1x_1^2 + \ldots + u_sx_s^2 = 0 \pmod p\\
		v_1x_1^3 + \ldots + v_sx_s^3 = 0 \pmod p
	\end{split}
\end{equation}
has a solution $(x_1, \ldots, x_s)$ with all $x_i \neq 0 \pmod p$,

\end{enumerate}

then the system has a solution $(p_1, \ldots, p_s)$ with all $p_i$ prime.  Moreover, if we let $R(P)$ be the number of solutions $(p_1, \ldots, p_s)$ with each $p_i \leq P$, each weighted by $(\log p_1)\ldots(\log p_s)$, then we have $R(P) \sim CP^{s-5}$ for some constant $C > 0$ uniformly over all choices of $u_1, \ldots, u_s, v_1, \ldots, v_s$.
\end{theorem}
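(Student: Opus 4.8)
\section*{Strategy of proof}

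The plan is to establish Theorem~\ref{mainThm} by the Hardy--Littlewood circle method in two dimensions, with one variable dual to the quadratic equation and one dual to the cubic. Write $e(\theta) = e^{2\pi i\theta}$ and, for $1 \le i \le s$, put
\[
	g_i(\alpha,\beta) = \sum_{p \le P}(\log p)\, e\bigl(\alpha u_i p^2 + \beta v_i p^3\bigr),
\]
so that, by orthogonality,
\[
	R(P) = \int_0^1\!\!\int_0^1 \prod_{i=1}^s g_i(\alpha,\beta)\, d\alpha\, d\beta .
\]
Applying Dirichlet's theorem in each coordinate, I would dissect $[0,1)^2$ into major arcs $\mathfrak{M}$, the union of boxes about rationals $(a/q,b/q)$ with $q \le P^{\nu}$ for a small fixed $\nu > 0$, and minor arcs $\mathfrak{m} = [0,1)^2 \setminus \mathfrak{M}$. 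The goal then splits into showing $\int_{\mathfrak{M}} = \mathfrak{S}\,\mathfrak{J}\,P^{s-5} + o(P^{s-5})$ with $\mathfrak{S}\mathfrak{J} \gg 1$, and $\int_{\mathfrak{m}} = o(P^{s-5})$.

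On the major arcs I would replace each $g_i$ by its expected main term: by the Siegel--Walfisz theorem on primes in arithmetic progressions, $g_i(\alpha,\beta)$ is well approximated on $\mathfrak{M}$ by $\phi(q)^{-1}\,S_i(q,a,b)\,I_i(\alpha - a/q,\,\beta - b/q)$, where $S_i$ is a complete exponential sum to modulus $q$ and $I_i$ a smooth integral. Summing over $\mathfrak{M}$ and completing the sum and integral produces the singular series $\mathfrak{S} = \prod_p \sigma_p$ and the singular integral $\mathfrak{J}$. Positivity of $\mathfrak{J}$ follows from hypothesis~(1): a nonsingular real zero carves out a piece of the real solution manifold of positive volume, so $\mathfrak{J} \gg 1$. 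For $\mathfrak{S}$ I would use hypothesis~(3): for $p \ge 5$ a solution of~(\ref{localSystem}) with all coordinates in $(\mathbb{Z}/p\mathbb{Z})^{\times}$ can, because $s$ is large, be upgraded to a nonsingular $p$-adic point, which Hensel-lifts and forces $\sigma_p$ to be bounded below by a positive absolute constant; the primes $p = 2$ and $p = 3$ are treated separately, and this is exactly where the extra congruence hypotheses recorded in the abstract are used. A standard estimate on the product over primes then shows $\mathfrak{S}$ converges to a quantity bounded below uniformly in the coefficients, which gives the asserted uniformity. Hence $\int_{\mathfrak{M}} = \mathfrak{S}\,\mathfrak{J}\,P^{s-5} + o(P^{s-5})$ with $\mathfrak{S}\mathfrak{J} \gg 1$.

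The crux of the argument, and the step I expect to be the main obstacle, is the minor arc bound $\int_{\mathfrak{m}} \prod_{i=1}^s|g_i| = o(P^{s-5})$, for which I would combine a pointwise Weyl-type estimate on one factor with a mean value estimate for the rest. For the pointwise bound, a standard sieve decomposition (Vaughan's identity) reduces $\sum_{p \le P}(\log p)\,e(\alpha p^2 + \beta p^3)$ to Type~I and Type~II bilinear sums amenable to Weyl differencing, yielding $\sup_{(\alpha,\beta) \in \mathfrak{m}}|g_1(\alpha,\beta)| \ll P^{1-\tau}$ for some fixed $\tau > 0$. For the mean value, I would appeal to the resolution of Vinogradov's mean value theorem in the cubic case~\cite{bdg},~\cite{wooleyCubicCase}: applying it to the complete system $\{1,2,3\}$ over primes and summing over the value $h = \sum(p_j - q_j)$ of the linear defect---each fibre of which contributes at most the count of the full $\{1,2,3\}$ system, since the relevant exponential sum raised to the power $2t$ is nonnegative---gives, for every real $t \ge 6$,
\[
	\int_0^1\!\!\int_0^1 \Bigl|\sum_{p \le P}(\log p)\,e(\alpha p^2 + \beta p^3)\Bigr|^{2t}\, d\alpha\, d\beta \ll P^{2t - 5 + \varepsilon},
\]
the logarithmic weights and the restriction to primes being harmless. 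Since $s \ge 13$ we may take $2t = s - 1 \ge 12$ here, so $(s-1)/2 \ge 6$, and after rescaling the torus to absorb the coefficients and applying Hölder's inequality to $g_2, \ldots, g_s$,
\[
	\int_{\mathfrak{m}} \prod_{i=1}^s|g_i| \ll \Bigl(\sup_{\mathfrak{m}}|g_1|\Bigr)\int_0^1\!\!\int_0^1 \prod_{i=2}^s|g_i| \ll P^{1-\tau}\cdot P^{(s-1)-5+\varepsilon} = P^{s-5-\tau+\varepsilon},
\]
which is $o(P^{s-5})$ once $\varepsilon < \tau$; thus $s \ge 13$ enters precisely as the condition that places the mean value in the sharp range of the cubic Vinogradov system. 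The fine details of the minor arc analysis---the Type~I/II estimates and the pruning of the major arcs---I would organize following Wooley's treatment~\cite{wooleyDiagEq} of the integer system, the passage to prime variables being the only genuinely new ingredient. Assembling the major and minor arc contributions yields $R(P) \sim \mathfrak{S}\,\mathfrak{J}\,P^{s-5}$ with $\mathfrak{S}\mathfrak{J} \gg 1$ uniformly in the coefficients, which is the assertion of the theorem; in particular $R(P) > 0$ for all large $P$, so the system has solutions in primes.
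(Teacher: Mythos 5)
Your high-level architecture --- two-dimensional circle method, major arcs producing a singular series and singular integral via Siegel--Walfisz, minor arcs handled by Vaughan's identity on one factor and a cubic Vinogradov mean value on the rest, with $s \ge 13$ forced by the critical exponent $r = 6$ --- is the same as the paper's. But your quantitative framework for the arc dissection is wrong in a way that the prime setting does not permit you to fix by adjusting constants. You propose major arcs around rationals with $q \le P^{\nu}$ for a fixed $\nu > 0$, and a corresponding power saving $\sup_{\mathfrak{m}}|g_1| \ll P^{1-\tau}$ on the minor arcs. In the integer-variable problem this is exactly what Wooley does, but the whole point of the passage to primes is that the major arc approximation of $g_i$ rests on Siegel--Walfisz, whose error term is only $O(\exp(-c\sqrt{\log x}))$ and which is usable only for moduli $q \ll (\log P)^A$. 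Consequently the major arcs here must be $\log$-power sized, which is what the paper does (taking $Q = (\log P)^A$). Once the major arcs are that narrow, the minor arcs contain points $(\alpha,\beta)$ lying near rationals $a/q, b/q$ with $q$ only slightly larger than $(\log P)^A$; at such points the exponential sum is genuinely of size about $P/q^{1/2+\epsilon}$, so the best one can hope for uniformly on $\mathfrak{m}$ is a saving of a power of $\log P$, not a power of $P$. The claimed $P^{1-\tau}$ bound is therefore unobtainable (without something like GRH), and the paper instead proves $\sup_{\mathfrak{m}}|g_1| \ll P(\log P)^{-D}$, with $D = D(A)$ arbitrarily large, by running Vaughan's identity and estimating the Type~I and Type~II sums using the Vinogradov mean value estimates for $J_{3,2}$ and $J_{s,3}$ (rather than Weyl differencing, which would be far too weak for the cubic). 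Your overall plan --- sup on one factor, H\"older with $2t = s-1 \ge 12$ on the rest --- does survive once you replace $P^{1-\tau}$ by $P(\log P)^{-D}$ and track that the mean value $\int|g|^{12}$ costs only an extra $(\log P)^{12}$, but you should recognize that this $\log$-versus-power distinction is not cosmetic and is precisely what makes the pointwise minor arc lemma the technical heart of the paper.

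Two smaller points. First, hypothesis~(1) of the theorem only asserts a nontrivial real solution, not a nonsingular one; the positivity of the singular integral should be referred to Wooley's treatment (Lemma~7.4 of the cited paper) rather than asserted from ``a nonsingular real zero.'' Second, the congruence conditions modulo~$2$ and~$3$ that you invoke for the positivity of $\sigma_2, \sigma_3$ belong to the statement of Theorem~\ref{mainThmv2}, not Theorem~\ref{mainThm}; for the theorem under discussion, local solvability at every prime (with all coordinates in $(\mathbb{Z}/p\mathbb{Z})^\times$) is taken as a hypothesis, and the lifting to $p$-adic density uses a Hensel-type lemma (Wooley's Lemma~6.7) adapted to reduced residues, not a bare nonsingularity argument.
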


In Section~\ref{sec:localConditions} we give a sufficient condition for (\ref{localSystem}) to be satisfied, giving us the explicit theorem

\begin{theorem}\label{mainThmv2} Consider the system

\begin{equation} \label{generalSystem}
	\begin{split}
		u_1p_1^2 + \ldots + u_sp_s^2 = U\\
		v_1p_1^3 + \ldots + v_sp_s^3 = V
	\end{split}
\end{equation}
where $u_1, \ldots, u_s, v_1, \ldots, v_s$, are nonzero integer constants and $U$, $V$ are integer constants.  If

\begin{enumerate}

\item the system has a nontrivial real solution,

\item $s \geq 12$,

\item the quadratic form $u_1p_1^2 + \ldots + u_sp_s^2$ is indefinite,

\item $\displaystyle\sum_{i=1}^s u_i = U \pmod 2$ and $\displaystyle\sum_{i=1}^s v_i = V \pmod 2,$

\item $\displaystyle\sum_{i=1}^s u_i = U \pmod 2$, and

\item for each prime $p \neq 2$, at least 7 of each of the $u_i$ and the $v_i$ are not zero modulo $p$,

\end{enumerate}

then the system has a solution $(p_1, \ldots, p_s)$ with all $p_i$ prime.  Moreover, if we let $R(P)$ be the number of solutions $(p_1, \ldots, p_s)$, each weighted by $(\log p_1)\ldots(\log p_s)$, then we have $R(P) \sim CP^{s-5}$ where $C > 0$ uniformly over all choices of $u_1, \ldots, u_s, v_1, \ldots, v_s$, $U$, and $V$.

\end{theorem}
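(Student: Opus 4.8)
The plan is to derive Theorem~\ref{mainThmv2} from Theorem~\ref{mainThm}. The circle method argument that proves Theorem~\ref{mainThm} applies essentially verbatim to the inhomogeneous system~\eqref{generalSystem}: replacing $0$ by the fixed integers $U,V$ affects neither the minor-arc nor the major-arc estimates (it introduces only harmless exponential factors depending on $U,V$), and one again obtains $R(P)\sim\mathfrak{S}P^{s-5}$; hypothesis~(1) controls the archimedean contribution to $\mathfrak{S}$ exactly as before. Thus the entire task reduces to showing that hypotheses~(3)--(5) force, for every prime $p$, the congruence system
\begin{equation}\label{localShifted}
u_1x_1^2+\cdots+u_sx_s^2\equiv U,\qquad v_1x_1^3+\cdots+v_sx_s^3\equiv V\pmod p
\end{equation}
to have a solution with every $x_i$ a unit modulo $p$ --- this being for~\eqref{generalSystem} the exact analogue of hypothesis~(3) of Theorem~\ref{mainThm}, and (as there) enough to guarantee that the local factors of $\mathfrak{S}$ are positive and bounded away from $0$ uniformly in $p$.

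For $p=2$, hypothesis~(3) says precisely that $x_1=\cdots=x_s=1$ solves~\eqref{localShifted}. For $p=3$ one has $x^2\equiv1$ and $x^3\equiv x$ for every unit $x$, so hypothesis~(4) makes the first congruence in~\eqref{localShifted} hold for any choice of units $x_i\in\{\pm1\}$, and it remains to choose signs with $\sum_iv_ix_i\equiv V\pmod 3$; as at least seven of the $v_i$ are units modulo $3$ by hypothesis~(5), the Cauchy--Davenport inequality shows $\{\sum_iv_ix_i:x_i\in\{\pm1\}\}=\mathbb{F}_3$, so this is possible.

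The case $p\ge5$ is the heart of the matter and is where hypothesis~(5) is used in full. Setting $x_i=1$ at each index with $p\mid u_i$ and $p\mid v_i$ leaves a system supported on a set $I$ of indices, at least seven of which have $p\nmid u_i$ and at least seven of which have $p\nmid v_i$. For all large $p$ I would count the unit solutions of~\eqref{localShifted} by orthogonality: the count equals
\[
p^{-2}\sum_{\alpha,\beta\bmod p}e_p(-\alpha U-\beta V)\prod_{i\in I}\sum_{x=1}^{p-1}e_p(\alpha u_ix^2+\beta v_ix^3),
\]
and for every term with $(\alpha,\beta)\ne(0,0)$ at least seven of the inner sums are complete sums over $\mathbb{F}_p^{\times}$ of a polynomial of degree $2$ or $3$, hence are $O(\sqrt p)$ by Weil's bound; summing over $(\alpha,\beta)$ then shows the total error is dominated by the main term $(p-1)^{|I|}p^{-2}$ once $p$ exceeds an absolute constant, which simultaneously gives the uniform lower bound on the associated local density. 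The finitely many remaining primes $p\in\{5,7,11,13,17,19,23\}$ I would handle directly, exploiting the group structure of squares and cubes in $\mathbb{F}_p^{\times}$. When $p\equiv2\pmod3$, cubing is a bijection of $\mathbb{F}_p$, so the substitution $w_i=x_i^3$ turns the cubic congruence into the linear equation $\sum v_iw_i\equiv V$ and the quadratic congruence into $\sum u_iw_i^{e}\equiv U$, where $w\mapsto w^{e}$ is a two-to-one map of $\mathbb{F}_p^{\times}$ onto the quadratic residues; one solves the latter by Cauchy--Davenport on the $\ge7$ unit $u_i$, each $w_i$ then being free up to sign, and uses those signs (together with any index with $p\nmid v_i$ but $p\mid u_i$) to satisfy the linear equation. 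When $p\equiv1\pmod3$ the cubes and squares of units instead lie in proper subgroups of $\mathbb{F}_p^{\times}$, and a similar Cauchy--Davenport analysis of the resulting restricted sumsets, using the seven unit coefficients in each equation, again produces a solution.

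I expect the small-prime analysis to be the main obstacle, and in particular the primes $p=5$ and $p=7$, where the relevant exponential sums show no usable cancellation and the squares or cubes take very few values: one must verify by hand that seven unit coefficients really do suffice for each Cauchy--Davenport sumset to exhaust $\mathbb{F}_p$, and that the coupling between the two congruences can always be absorbed by a single extra variable or sign. If this casework proves unwieldy, the fallback is to invoke the known theorems on $p$-adic solvability of simultaneous additive equations of Davenport--Lewis type to dispose of all the small primes at once.
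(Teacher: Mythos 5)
Your overall plan coincides with the paper's: Theorem~\ref{mainThmv2} follows from the circle-method machinery behind Theorem~\ref{mainThm} once one shows that hypotheses~(3)--(5) force a unit solution of the local system modulo every prime, and the paper establishes exactly this in Theorem~\ref{localSolvability}. Your arguments for $p=2$ and $p=3$ are correct (the paper disposes of $p=3$ by a slightly different partitioning trick, but Cauchy--Davenport works equally well). Your orthogonality-plus-Weil argument for large $p$ is likewise the paper's: with seven indices whose coefficients are units, the paper proves (Lemmas~\ref{explicitWBound} and~\ref{MpBound})
\[
M_t(p) \;\geq\; \frac{1}{p^2}\bigl((p-1)^7 - (p^2-1)(2\sqrt p+1)^7\bigr),
\]
which is positive for $p>40.58$.

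The genuine gap is in your treatment of the remaining small primes, and it is not a minor one. First, your threshold is wrong: you list $\{5,7,11,13,17,19,23\}$ as the primes left over, but the Weil estimate in this setting only kicks in past $p\approx41$, so $29$, $31$, and $37$ must also be checked. Second, you do not actually carry out the by-hand casework and you concede it "may prove unwieldy." That concession is well-founded: the paper explicitly notes that $t=7$ appears to be genuinely necessary at $p=7$, meaning one must survey essentially all reduced coefficient patterns $(u_1,\ldots,u_7,v_1,\ldots,v_7)$ modulo $p$ for each $p\leq37$; the paper handles this with an explicit computer search (Appendix~\ref{codeAppendix}) supported by the normalizations described in Section~\ref{sec:computationalTechniques}. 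Your Cauchy--Davenport sketch for $p\equiv1\pmod3$ and $p\equiv2\pmod3$ is left entirely at the level of intent. Third and most seriously, your stated fallback --- invoking Davenport--Lewis-type $p$-adic solvability theorems --- does not apply here as you describe it: those theorems produce a nontrivial $p$-adic solution but impose no requirement that every coordinate be a $p$-adic unit, whereas the unit condition is exactly what is needed because the variables are primes. Without that constraint the local factors $1+\sum_k A(p^k)$ controlling $\mathfrak S$ need not be positive. To make the proposal into a proof you must either close the small-prime casework explicitly up to $p=37$ (with units) or supply the computational verification the paper relies on.
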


We use the Hardy-Littlewood circle method to prove these results.  Section~\ref{sec:setup} performs the necessary setup for the application of the circle method: defining the relevant functions and the major arc/minor arc dissection.  Section~\ref{sec:lemmas} consists of a number of preliminary lemmas, which are referenced throughout.  Section~\ref{sec:minorArcs} proves a Hua-type bound necessary for the minor arcs.  Section~\ref{sec:Fbound} proves a Weyl-type bound on the minor arcs by means of Vaughan's identity.  Section~\ref{sec:majorArcs} is the circle method reduction to the singular series and singular integral.  Section~\ref{sec:singularSeriesConvergence} shows the convergence of the singular series and Section~\ref{sec:singularSeriesPositivity} shows that it is eventually positive, contingent on the local solvability of the system (\ref{generalSystem}).  Section~\ref{sec:localConditions} shows sufficient conditions for the solvability of the local system.  This depends on a computer check of local solvability for a finite number of primes.  Section~\ref{sec:computationalTechniques} discusses several techniques which can be employed to improve the efficiency of this computation.  Section~\ref{sec:conclusion} finishes the proof of Theorems~\ref{mainThm} and~\ref{mainThmv2}.  Appendix~\ref{codeAppendix} contains the source code used to run the computations laid out in Section~\ref{sec:computationalTechniques}.

\section{Notation and Definitions}\label{sec:setup}

As is standard in the literature, we use $e(\alpha)$ to denote $e^{2\pi i \alpha}$.  The letter $p$ is assumed to refer to a prime wherever it is used, and $\varepsilon$ means a sufficiently small positive real number.  The symbols $\Lambda$ and $\mu$ are the von Mangoldt and M\"obius functions, respectively.  Symbols in bold are tuples, with the corresponding symbol with a subscript denoting a component, i.e., $\mathbf{a} = (a_1, \ldots, a_k)$.  The letter $C$ is used to refer to a positive constant, with the value of $C$ being allowed to change from line to line.  We write $f(x) \ll g(x)$ for $f(x) = O(g(x))$, $f(x) \asymp g(x)$ if both $f(x) \ll g(x)$ and $g(x) \ll f(x)$ hold, and $f(x) \sim g(x)$ if $f(x)/g(x) \rightarrow 1$ as $x \rightarrow \infty$.  When we refer to a solution of the system under study, we mean an ordered $s$-tuple of prime numbers $(p_1, \ldots, p_s)$ satisfying~(\ref{generalSystem}).

Define the generating function 
\[
	f_i(\alpha, \beta) = \sum_{p \leq P} (\log p) e(\alpha u_i p^2 + \beta v_i p^3).
\]

Let $\mathcal{A}$ be the unit square $(\mathbb{R}/\mathbb{Z})^2$ and let
\begin{equation}\label{Rdef}
	R(P) = \int_{\mathcal{A}} \prod_{i=1}^s f_i(\alpha, \beta) d\alpha d\beta
\end{equation}
\[
	= \int_{\mathcal{A}} \sum_{p_1, \ldots, p_s \leq P} \prod_{i=1}^s \left( (\log p_i) e(\alpha u_i p_i^2 + \beta v_i p_i^3) \right) d\alpha d\beta
\]
\[
	= \sum_{\substack{\{p_1, \ldots, p_s\} \\ satisfies~(\ref{eq:system})}} \prod_{i=1}^s (\log p_i).
\]
Thus $R(P)>0$ if and only if there is a solution to the system~(\ref{eq:system}).

We divide $\mathcal{A}$ into major and minor arcs.  For any $T$ with $1 \leq T \leq P$. and for all $q < T$, $1 \leq a \leq q$, $1 \leq b \leq q$, $(a, b, q) = 1$, let a typical major arc $\mathfrak{M}(a,b,q;T)$ consist of all $(\alpha, \beta)$ such that

\[
	|\alpha-a/q| \leq \frac{T}{qP^2} \qquad \text{and} \qquad |\beta-b/q| \leq \frac{T}{qP^3}.
\]
Let the major arcs $\mathfrak{M}(T)$ be the union of all such $\mathfrak{M}(a,b,q)$, and let the minor arcs $\mathfrak{m}(T)$ be the remainder of $\mathcal{A}$.

We will use two distinct dissections in our argument: the primary dissection into $\mathfrak{M} = \mathfrak{M}(Q)$ and $\mathfrak{m} = \mathfrak{m}(Q)$ with $Q = (\log P)^A$, where $A$ is a positive constant whose value will be fixed later, and a secondary dissection $\mathfrak{M}(R)$, $\mathfrak{m}(R)$ with $R = P^{\frac{1}{2}+\varepsilon}$.

\section{Preliminary Lemmas}\label{sec:lemmas}

We begin by defining the necessary generating functions.  Let 
\[
	f(\pmb\alpha) = \sum_{P < p \leq 2P} e(\alpha_2p^2 + \alpha_3p^3),
\]
\[
	g(\pmb \alpha) = \sum_{P < n \leq 2P} e(\alpha_2n^2 + \alpha_3n^3),
\]

\begin{equation}\label{SDef}
		S(q,\pmb{a}) = \sum_{n=1}^q e\left(\frac{a_2n^2 + a_3n^3}{q}\right),
\end{equation}
\begin{equation}\label{WDef}
		W(q,\pmb{a}) = \sum_{\substack{n=1 \\ (n,q)=1}}^q e\left(\frac{a_2n^2 + a_3n^3}{q}\right),
\end{equation}
\[
	v(\pmb\theta) = \int_P^{2P} e(\theta_2x^2 + \theta_3x^3) dx,
\]
and for $\pmb\gamma \in \mathfrak{M}(R)$ let
\[
	V(\pmb\gamma) = \frac{1}{q} S(q,\pmb{a}) v\left(\gamma_2 - \frac{a_2}{q}, \gamma_3 - \frac{a_3}{q}\right).
\]

\begin{lemma}\label{g1012Bounds} We have the bounds
\[
	\int_\mathcal{A} |g(\pmb\alpha)|^{10} d\pmb\alpha \ll P^{\frac{31}{6}+\varepsilon}
\]
and
\[
	\int_\mathcal{A} |g(\pmb\alpha)|^{12} d\pmb\alpha \ll P^7.
\]
\end{lemma}
\begin{proof}
This is the relevant portion of Theorem 1.3 of~\cite{wooleyDiagEq}.
\end{proof}

\begin{lemma}\label{f1012Bounds} We have the bounds
\[
	\int_\mathcal{A} |f(\pmb\alpha)|^{10} d\pmb\alpha \ll P^{\frac{31}{6}+\varepsilon} d\pmb\alpha
\]
and
\[
	\int_\mathcal{A} |f(\pmb\alpha)|^{12} d\pmb\alpha \ll P^7 d\pmb\alpha.
\]
\end{lemma}
\begin{proof}
For any positive integer $k$,
\[
	\int_\mathcal{A} |g(\pmb\alpha)|^{2k} d\pmb\alpha
\]
is the number of positive integer solutions to the system
\[
	p_1^2 + \ldots + p_k^2 = p_{k+1}^2 + \ldots + p_{2k}^2
\]
\[
	p_1^3 + \ldots + p_k^3 = p_{k+1}^3 + \ldots + p_{2k}^3
\]
and 
\[
	\int_\mathcal{A} |f(\pmb\alpha)|^{2k} d\pmb\alpha
\]
is the number of prime solutions to the same system, so this lemma follows from Lemma~\ref{g1012Bounds}.
\end{proof}

\begin{lemma}\label{pointwisegBound}
\[
	\sup_{\pmb\alpha\in\mathfrak{m}(R)} |g(\pmb\alpha)| \ll P^{\frac{5}{6} - \frac{\delta}{3} + \varepsilon}.
\]
\end{lemma}
\begin{proof}
This follows from Lemma 5.2 of~\cite{wooleyDiagEq}.
\end{proof}

\begin{lemma}\label{vBound}
\[
	v(\pmb\theta) \ll \frac{P}{(1 + P^3|\theta_3|)^{1/2}}.
\]
\end{lemma}
\begin{proof}
If $|\theta_3| \leq P^{-3}$, the result is immediate.  Thus we assume $|\theta_3| > P^{-3}$.  Let $K = (|\theta_3| P)^{\frac{1}{2}}$ and let $r(x) = \theta_2x^2+\theta_3x^3$.  Then $r'(x) = 2\theta_2x + 3\theta_3x^2$ has at most one zero in $[P,2P]$.  Thus we can divide $[P,2P]$ into subsets $I_1$ and $I_2$ such that $|r'(x)| \geq K$ on $I_1$, where $I_1$ is the union of at most three intervals $J$ such that $r'(x)$ is monotonic on each, and $|r'(x)| \leq K$ on $I_2$, where $I_2$ is the union of at most two intervals.

First we consider $I_1$:
\[
	\int_{I_1} e(r(x)) dx = \int_{I_1} \frac{1}{2\pi i r'(x)} \frac{d}{dx}e(r(x)) dx,
\]
so, upon integrating by parts,
\[
	\int_{I_1} e(r(x)) dx = \frac{e(r(x))}{2\pi i r'(x)} \bigg|_{I_1} + \int_{I_1} \frac{r''(x)}{2\pi i r'(x)^2} e(r(x)) dx.
\]
The integral on the right is bounded by
\[
	\int_{I_1} \frac{|r''(x)|}{2\pi r'(x)^2} dx = \left|\int_{I_1} \frac{r''(x)}{2\pi r'(x)^2} dx \right| = \left|\frac{-1}{2\pi r'(x)} \bigg|_{I_1} \right| \ll \frac{1}{K},
\]
since $r'(x)$ is monotonic on each interval in $I_1$.  Thus
\begin{equation}{\label{I1Bound}
	\int_{I_1} e(r(x)) dx \ll \frac{e(r(x))}{2\pi i r'(x)} \bigg|_{I_1} + \frac{1}{K} \ll \frac{1}{K} \ll \frac{P}{(1+|\theta_3|P^3)^{1/2}}}.
\end{equation}

Next we consider $I_2$.  Given an interval in $I_2$, let $x_0$ be one of its endpoints.  
Then for any $x$ in $I_2$,
\[
	|x-x_0||2\theta_2 + 3\theta_3(x+x_0)| = |r'(x) - r'(x_0)| \leq 2K.
\]
Moreover,
\begin{equation}\label{phiBoundedByLambda}
	|2\theta_2+3\theta_3x_0| = \frac{|r'(x_0)|}{x_0} \leq \frac{K}{x_0}.
\end{equation}
Applying the triangle identity to (\ref{phiBoundedByLambda}) yields
\begin{equation}\label{phiDiffUpperBound}
	|3\theta_3x| - \frac{K}{x_0} \leq |2\theta_2+3\theta_3(x+x_0)|.
\end{equation}
Also,
\begin{equation}\label{phiDiffLowerBound}
	|3\theta_3x| - \frac{K}{x_0} \geq 3|\theta_3|P - \frac{K}{P} \geq 2|\theta_3|P.
\end{equation}
Combining (\ref{phiBoundedByLambda}), (\ref{phiDiffUpperBound}), and (\ref{phiDiffLowerBound}) yields
\[
	|x-x_0| \leq \frac{2K}{2|\theta_3|P} = \frac{P}{(|\theta_3|P^3)^{1/2}}.
\]
Thus
\begin{equation}\label{I2Bound}
	\begin{split}
		\int_{I_2} e(r(x)) dx 	& \ll |e(r(x))| \left(\operatorname{meas} (I_2)\right) \\ 
									& \ll 2\max_{x\in I_2} |x-x_0| \\
									& \ll \frac{P}{(1 + |\theta_3|P^3)^{1/2}}.
	\end{split}
\end{equation}

Combining (\ref{I1Bound}) and (\ref{I2Bound}) now gives the desired result.

\end{proof}

\begin{lemma} Let $t = 12-\delta$.  Then
\[
	\int_\mathcal{A} |f(\pmb\alpha)|^{t-1} d\pmb\alpha \ll P^{t-6+\frac{1+\delta}{12}+\varepsilon}.
\]
\end{lemma}
\begin{proof}
By H\"older's inequality
\[
	\int_\mathcal{A} |f(\pmb\alpha)|^{t-1} d\pmb\alpha \leq \left(\int_\mathcal{A} |f(\pmb\alpha)|^{12} d\pmb\alpha\right)^{\frac{t-11}{2}} \left(\int_\mathcal{A} |f(\pmb\alpha)|^{10} d\pmb\alpha\right)^{\frac{13-t}{2}}.
\]
Applying Lemma~\ref{f1012Bounds} gives
\[
	\int_\mathcal{A} |f(\pmb\alpha)|^{t-1} d\pmb\alpha \ll P^{\frac{7t-77}{2} + \frac{403-31t}{12} + \varepsilon} = P^{t-6+\frac{1+\delta}{12}+\varepsilon}.
\]

\end{proof}

\begin{lemma}\label{majorArcgBound}
Let $R = P^{\frac{1}{2}+\delta}$ and let $\pmb\gamma \in \mathfrak{M}(R)$.  Then
\[
	g(\pmb\gamma) = V(\pmb\gamma) + O\left(P^{\frac{5}{6} - \frac{\delta}{3}}\right).
\]
\end{lemma}
This follows from Theorem 7.2 of \cite{vaughanHLM}.

\begin{lemma}\label{kappaDef} Let $\kappa(q)$ be the multiplicative function defined by
\[
	\kappa(p^j) = \begin{cases}
	C p^{-1/2} & j=1, \\
	C p^{-5/8} & j=2, \\
	C p^{-j/4} & j>2.
	\end{cases}
\]
Then there is a positive constant $C$ such that
\[
	\max_{\substack{\mathbf{a} \\ (q,a_2,a_3)=1}}\frac{|S(q,\mathbf{a})|}{q} \leq \kappa(q).
\]
\end{lemma}
\begin{proof}
The case $j=1$ follows from Theorem 2E of~\cite{schmidt}.  The cases with $j > 1$ follow from Theorem 7.1 of~\cite{vaughanHLM}.
\end{proof}

Let
\begin{equation}\label{sDef}
	s_k(\mathbf{m}) = m_1^k + m_2^k + m_3^k - m_4^k - m_5^k - m_6^k.
\end{equation}

\begin{lemma}\label{rogLemma}Let $Q>0$ and let $M(Q)$ be the number of solutions of the system
\[
	s_2(\mathbf{m}) = 0
\]
\[
	s_1(\mathbf{m}) = 0
\]
with all $m_j \leq Q$.  Then there is a positive constant $C$ such that
\[
	M(Q) \sim C Q^3 \log Q.
\]
\end{lemma}
This is a result of Rogovskaya~\cite{rogovskaya}.

\begin{lemma}\label{WqaBound} If $(q, a_2, a_3)=1$, then
\[
	W(q, \mathbf{a}) \ll q^{\frac{1}{2}+\varepsilon}.
\]
In addition, if $(p,a_2,a_3)=1$, then
\[
	W(p, \mathbf{a}) \ll p^{\frac{1}{2}}.
\]

\end{lemma}

\begin{proof}

The case where $q = p$ follows from Theorem 2E of~\cite{schmidt}.  The case for general $q$ follows from Lemma 8.5 of~\cite{hua}.

\end{proof}

\section{Minor Arc Bounds}\label{sec:minorArcs}

The primary purpose of this section is to prove the following theorem, which, together with the result of the next section, will provide the necessary minor arc bounds for our circle method approach.

\begin{theorem}\label{huaBound}
Let $\delta < 1$ be a small positive number, and let $t = 12 - \delta$.  Then
\[
	\int_\mathcal{A} |f(\pmb\alpha)|^t d\pmb\alpha \ll P^{t-5}(\log P).
\]
\end{theorem}

Let $t = 12-\delta$ for some small $\delta > 0$, and let 
\[
	I_t(P) = \int_\mathcal{A} |f(\pmb\alpha)|^t d\pmb\alpha.
\]

\begin{lemma}\label{theTrick}
\[
	I_t(P)^2 \ll P^{2t-10} + P\mathop{\int_\mathcal{A} \int_\mathcal{A}}_{\pmb\alpha-\pmb\beta\in\mathfrak{M}(R)} |V(\pmb\alpha - \pmb\beta)||f(\pmb\alpha)|^{t-1}|f(\pmb\beta)|^{t-1} d\pmb\alpha d\pmb\beta.
\]
\end{lemma}
\begin{proof}
\[
	I_t(P) = \int_\mathcal{A} f(\pmb\alpha)f(-\pmb\alpha)|f(\pmb\alpha)|^{t-2} d\pmb\alpha
\]
\begin{equation}\label{trickStart}
	= \sum_{P < p \leq 2P}\int_\mathcal{A} e(\alpha_2p^2 + \alpha_3p^3) f(-\pmb\alpha)|f(\pmb\alpha)|^{t-2} d\pmb\alpha.
\end{equation}

Applying the Cauchy-Schwarz identity to (\ref{trickStart}) yields
\[
	I_t(P)^2 \ll P\sum_{P < n \leq 2P} \left| \int_\mathcal{A} e(\alpha_2n^2 + \alpha_3n^3) f(-\pmb\alpha)|f(\pmb\alpha)|^{t-2} d\pmb\alpha \right|^2
\]
\[
	= P \int_\mathcal{A}\int_\mathcal{A} g(\pmb\alpha-\pmb\beta) f(-\pmb\alpha)|f(\pmb\alpha)|^{t-1} f(\pmb\beta)|f(\pmb\beta)|^{t-1} d\pmb\alpha d\pmb\beta
\]
\begin{equation}\label{trickReady}
	\leq P \int_\mathcal{A}\int_\mathcal{A} |g(\pmb\alpha-\pmb\beta)| |f(\pmb\alpha)|^{t-1} |f(\pmb\beta)|^{t-1} d\pmb\alpha d\pmb\beta.
\end{equation}

By Lemma~\ref{pointwisegBound} and recalling that $t = 12-\delta$, we can bound the minor arc portion of (\ref{trickReady}):
\[
	 P \mathop{\int_\mathcal{A}\int_\mathcal{A}}_{\pmb\alpha-\pmb\beta \in \mathfrak{m}(R)} |g(\pmb\alpha-\pmb\beta)| |f(\pmb\alpha)|^{t-1} |f(\pmb\beta)|^{t-1} d\pmb\alpha d\pmb\beta
\]
\[
	\ll P^{\frac{11}{6} - \frac{\delta}{3}+\varepsilon} \left(\int_\mathcal{A} |f(\pmb\alpha)|^{t-1}d\pmb\alpha\right)^2
\]
\[
	\ll P^{2t-10+\frac{13-t}{6}-\frac{1}{6}-\frac{\delta}{3}+2\varepsilon}
\]
\begin{equation}\label{minorArcItP2}
	\ll P^{2t-10-\frac{\delta}{6}+2\varepsilon} \ll P^{2t-10}.
\end{equation}

We now apply Lemma~\ref{majorArcgBound} to the major arc portion of (\ref{trickReady}).
\[
	P \mathop{\int_\mathcal{A}\int_\mathcal{A}}_{\pmb\alpha-\pmb\beta \in \mathfrak{M}(R)} |g(\pmb\alpha-\pmb\beta)| |f(\pmb\alpha)|^{t-1} |f(\pmb\beta)|^{t-1} d\pmb\alpha d\pmb\beta
\]
\[
	= P \mathop{\int_\mathcal{A}\int_\mathcal{A}}_{\pmb\alpha-\pmb\beta \in \mathfrak{M}(R)} |V(\pmb\alpha-\pmb\beta)| |f(\pmb\alpha)|^{t-1} |f(\pmb\beta)|^{t-1} d\pmb\alpha d\pmb\beta
\]
\[
+ O\left(P^{\frac{11}{6}-\frac{\delta}{3}}  \left(\int_\mathcal{A} |f(\pmb\alpha)|^{t-1}d\pmb\alpha\right)^2 \right)
\]
\begin{equation}\label{majorArcItP2}
	= P \mathop{\int_\mathcal{A}\int_\mathcal{A}}_{\pmb\alpha-\pmb\beta \in \mathfrak{M}(R)} |V(\pmb\alpha-\pmb\beta)| |f(\pmb\alpha)|^{t-1} |f(\pmb\beta)|^{t-1} d\pmb\alpha d\pmb\beta + O(P^{2t-10}).
\end{equation}

Combining (\ref{trickReady}), (\ref{minorArcItP2}), and (\ref{majorArcItP2}) yields the lemma.

\end{proof}

Let $\pmb\gamma = \pmb\alpha - \pmb\beta$,
\begin{equation}\label{lambdaDef}
	\lambda = \frac{t-6}{2} = 3 - \frac{\delta}{2}
\end{equation}
(note that $\lambda > 2$), and
\begin{equation}\label{JDef}
	J(\pmb\beta) = \int_{\mathfrak{M}(R)} |V(\pmb\gamma)|^\lambda |f(\pmb\beta + \pmb\gamma)|^6 d\pmb\gamma.
\end{equation}

\begin{lemma}\label{IBoundedByJ}
\[
	I_t(P) \ll P^{t-5} + P^\lambda \sup_{\pmb\beta \in \mathcal{A}} J(\pmb\beta).
\]
\end{lemma}
\begin{proof}

We begin by noting that
\[
	|V(\pmb\alpha-\pmb\beta)| |f(\pmb\alpha)|^{t-1} |f(\pmb\beta)|^{t-1}
\]
can be rewritten as
\begin{align}\label{VIntegrandBreakdown}
\begin{split}
	& \left(|V(\pmb\alpha-\pmb\beta)|^\lambda |f(\pmb\alpha)|^6 |f(\pmb\beta)|^t\right)^{\frac{1}{2\lambda}} \\
	\times & \left(|V(\pmb\alpha-\pmb\beta)|^\lambda |f(\pmb\beta)|^6 |f(\pmb\alpha)|^t\right)^{\frac{1}{2\lambda}} \\
	\times & \left(|f(\pmb\alpha)f(\pmb\beta)|^t\right)^{1-\frac{1}{\lambda}}.
\end{split}
\end{align}

Let 
\[
	I_t^*(P) = \mathop{\int_\mathcal{A} \int_\mathcal{A}}_{\pmb\alpha-\pmb\beta\in\mathfrak{M}(R)} |V(\pmb\alpha - \pmb\beta)||f(\pmb\alpha)|^{t-1}|f(\pmb\beta)|^{t-1} d\pmb\alpha d\pmb\beta
\]
be the integral on the right in Lemma~\ref{theTrick}.  Using (\ref{VIntegrandBreakdown}) to apply H\"older's inequality to $I_t^*(P)$, we obtain
\begin{align}
\begin{split}
	I_t^*(P) \ll & \left( \mathop{\int_\mathcal{A}\int_\mathcal{A}}_{\pmb\alpha-\pmb\beta\in\mathfrak{M}(R)} |V(\pmb\alpha-\pmb\beta)|^\lambda |f(\pmb\alpha)|^6 |f(\pmb\beta)|^t d\pmb\alpha\pmb\beta \right)^\frac{1}{\lambda} \\
	& \times \left( \mathop{\int_\mathcal{A}\int_\mathcal{A}}_{\pmb\alpha-\pmb\beta\in\mathfrak{M}(R)} |f(\pmb\alpha)f(\pmb\beta)|^t d\pmb\alpha\pmb\beta \right)^{1-\frac{1}{\lambda}}
\end{split}
\end{align}

\begin{equation}\label{ItBoundedByIt}
	\leq I_t(P)^{2-\frac{1}{\lambda}} \left( \sup_{\pmb\beta\in\mathcal{A}} \int_{\mathfrak{M}(R)} |V(\pmb\gamma)|^\lambda |f(\pmb\beta + \pmb\gamma)|^6 d\pmb\gamma \right)^\frac{1}{\lambda}.
\end{equation}

Applying (\ref{ItBoundedByIt}) to Lemma~\ref{theTrick}, we have
\[
	I_t(P)^2 \ll P^{2t-10} + PI_t(P)^{2-\frac{1}{\lambda}}\left( \sup_{\pmb\beta\in\mathcal{A}} J(\pmb\beta) \right)^{\frac{1}{\lambda}}.
\]
Thus either $I_t(P) \ll P^{t-5}$ or
\[
	I_t(P) \ll P^\lambda \sup_{\pmb\beta\in\mathcal{A}} J(\pmb\beta),
\]
which implies the desired result.

\end{proof}

\begin{lemma}\label{JBound} Let $N(q)$ be the number of solutions of the system
\[
	\begin{cases} s_3(\mathbf p) & \equiv 0 \pmod q, \\ s_2(\mathbf p) & =0, \\ P < p_j \leq 2P. \end{cases}
\]
Then

\[
	J(\pmb\beta) \ll P^{\lambda-3}\sum_{q \leq R} \kappa(q)^\lambda q N(q).
\]
\end{lemma}
\begin{proof}
By (\ref{JDef}) and the definition of $\mathfrak{M}(R)$,
\[
	J(\pmb\beta) = \sum_{q \leq R} \mathop{\sum_{a_2=1}^q \sum_{a_3=1}^q}_{(q,a_2,a_3)=1} \frac{|S(q,\mathbf{a})|^\lambda}{q^\lambda} \int_{-\frac{R}{qP^2}}^{\frac{R}{qP^2}} \int_{-\frac{R}{qP^3}}^{\frac{R}{qP^3}} |v(\pmb\theta)|^\lambda \left|f\left(\pmb\beta + \frac{\mathbf{a}}{q} + \pmb\theta \right)\right|^6 d\pmb\theta.
\]
By Lemma~\ref{vBound}, Lemma~\ref{kappaDef}, and the fact that for a given $q$, the intervals $\left[\frac{a_2}{q} - \frac{R}{qP^2}, \frac{a_2}{q} + \frac{R}{qP^2}\right]$ are disjoint for distinct $a_2$, 
\begin{equation}\label{JBreakdown}
	J(\pmb\beta) \leq \sum_{q \leq R} \int_{-\frac{R}{qP^3}}^{\frac{R}{qP^3}} \frac{\kappa(q)^\lambda P^\lambda}{(1 + P^3|\theta_3|)^{\lambda/2}} \sum_{a_3=1}^q \int_0^1 \left| f\left(\beta_2 + \phi, \beta_3 + \frac{a_3}{q} + \theta_3\right)\right|^6 d\phi d\theta_3.
\end{equation}

We now examine the inner sum and integral.
\[
	\sum_{a_3=1}^q \int_0^1 \left| f\left(\beta_2 + \phi, \beta_3 + \frac{a_3}{q} + \theta_3\right)\right|^6 d\phi
\]
\[
	= \sum_{a_3=1}^q \int_0^1 \sum_{\substack{\mathbf p \\ P < p_j \leq 2P}} e\left( (\beta_2+\phi)s_2(\mathbf{p}) + \left(\beta_3 + \frac{a_3}{q} + \theta_3\right)s_3(\mathbf{p})\right) d\phi
\]
\[
	= \sum_{\substack{\mathbf p \\ P < p_j \leq 2P}} e(\beta_2s_2(\mathbf{p}) + (\beta_3+\theta_3)s_3(\mathbf{p})) \sum_{a_3=1}^q e\left(\frac{a_3}{q}s_3(\mathbf{p})\right) \int_0^1 e(\phi s_2(\mathbf{p}))d\phi.
\]

Now
\[
	\sum_{a_3=1}^q e\left(\frac{a_3}{q}s_3(\mathbf{p})\right) 
	= \begin{cases}
	0 & s_3(\mathbf{p}) \not\equiv 0 \pmod q, \\
	q & s_3(\mathbf{p}) \equiv 0 \pmod q
	\end{cases}
\]
and
\[
	\int_0^1 e(\phi s_2(\mathbf{p}))d\phi
	= \begin{cases}
	0 & s_2(\mathbf{p}) \neq 0, \\
	1 & s_2(\mathbf{p}) = 0,
	\end{cases}
\]
so
\begin{equation}\label{innerSumBoundedByqN}
	\sum_{a_3=1}^q \int_0^1 \left| f\left(\beta_2 + \phi, \beta_3 + \frac{a_3}{q} + \theta_3\right)\right|^6 d\phi \ll qN(q).
\end{equation}

Substituting (\ref{innerSumBoundedByqN}) into (\ref{JBreakdown}) yields
\[
	J(\pmb\beta) \ll \sum_{q \leq R} \kappa(q)^\lambda qN(q) \int_{-\frac{R}{qP^3}}^{\frac{R}{qP^3}} \frac{P^\lambda}{(1 + P^3|\theta_3|)^{\lambda/2}} d\theta_3.
\]
Since $\lambda > 2$, this becomes
\[
	J(\pmb\beta) \ll P^{\lambda-3}\sum_{q \leq R} \kappa(q)^\lambda qN(q).
\]

\end{proof}

\begin{lemma}\label{NToN1}
Let $N_1(q)$ be the number of solutions to the system
\[
	\begin{cases}
	s_3(\mathbf p) & \equiv 0 \pmod q, \\
	s_2(\mathbf p) & = 0, \\
	P < p_j \leq 2P & p_j \nmid q.
	\end{cases}
\]
Then
\[
	qN(q) \ll q(\log q)^6 + qN_1(q).
\]
\end{lemma}
\begin{proof}
First, note that
\[
	qN(q) = \sum_{a_3=1}^q \int_0^1 \left|f\left(x, \frac{a_3}{q}\right)\right|^6 dx.
\]
Let
\[
	f_|(\pmb\alpha) = \sum_{\substack{P < p < 2P \\ p|q}} e(\alpha_2p^2 + \alpha_3p^3)
\]
and
\[
	f_\nmid(\pmb\alpha)\sum_{\substack{P < p < 2P \\ p\nmid q}} e(\alpha_2p^2 + \alpha_3p^3).
\]
Thus
\[
	f(\pmb\alpha) = f_|(\pmb\alpha) + f_\nmid(\pmb\alpha).
\]
Since $|f_|(\pmb\alpha)| \ll \log q$,
\[
	|f(\pmb\alpha)|^6 \ll (\log q)^6 + |f_\nmid(\pmb\alpha)|^6.
\]
But $|f_\nmid(\pmb\alpha)|^6 = N_1(q)$, so
\[
	qN(q) \ll q(\log q)^6 + qN_1(q).
\]
\end{proof}

\begin{lemma}\label{N1ToN2} Let $N_2(q)$ be the number of solutions of the system
\begin{equation}\label{N2Def}
	\begin{cases}
	s_3(\mathbf r) & \equiv 0 \pmod q, \\
	s_2(\mathbf r) & \equiv 0 \pmod q, \\
	s_2(q\mathbf m + \mathbf r) & =0, \\
	1 \leq r_j \leq q & (q,r_j)=1, \\
	\frac{P-r_j}{q} < m_j \leq \frac{2P - r_j}{q}.
	\end{cases}
\end{equation}
Then
\[
	N_1(q) \leq N_2(q).
\]
\end{lemma}
\begin{proof}
We classify the solutions $\mathbf{p}$ counted by $N_1(q)$ according to the residue class $r_j$ of each $p_j$ modulo $q$, and let $m_j = \frac{p_j-r_j}{q}$.  
Thus
\[
	0 = s_2(q\mathbf{m} + \mathbf{r}) \equiv s_2(\mathbf{r}) \pmod q,
\]
so $N_1(q) \leq N_2(q)$.
\end{proof}

\begin{lemma}\label{N2ToN3} Let $N_3(q)$ be the number of solutions of the system
\[
	\begin{cases}
	s_3(\mathbf r) & \equiv 0 \pmod q, \\
	s_2(\mathbf r) & \equiv 0 \pmod q, \\
	1 \leq r_j \leq q & (q,r_j)=1.
	\end{cases}
\]
Then
\[
	N_2(q) \ll N_3(q)P^4q^{-5}(\log P)\left(\frac{q^2}{P} + 1\right).
\]
\end{lemma}

\begin{proof}

Let $\mathbf{r}$, $\mathbf{m}$ be a solution counted in $N_2(q)$, i.e., let $\mathbf{r}$, $\mathbf{m}$ satisfy (\ref{N2Def}).  Expanding the third equation of (\ref{N2Def}) gives
\[
	q^2s_2(\mathbf{m}) + 2q(r_1m_1 + r_2m_2 + r_3m_3 - r_4m_4 - r_5m_5 - r_6m_6) + s_2(\mathbf{r}) = 0.
\]
Since $s_2(\mathbf{r}) \equiv 0 \pmod q$ by the second equation of (\ref{N2Def}), this can be rewritten as
\[
	qs_2(\mathbf{m}) + 2(r_1m_1 + r_2m_2 + r_3m_3 - r_4m_4 - r_5m_5 - r_6m_6) + \frac{s_2(\mathbf{r})}{q} = 0
\]
with each term remaining integer-valued.  For a fixed $\mathbf{r}$, define
\begin{equation}\label{HjDef}
	H_j(\alpha) = \sum_{\frac{P-r_j}{q} < m \leq \frac{2P - r_j}{q}} e\left(\alpha(qm^2 + 2r_jm)\right).
\end{equation}
Thus the number of $\mathbf{m}$ satisfying (\ref{N2Def}) for a given $\mathbf{r}$ is
\[
	\int_0^1 H_1(\alpha)H_2(\alpha)H_3(\alpha)H_4(-\alpha)H_5(-\alpha)H_6(-\alpha) e\left(\frac{s_2(\mathbf{r})}{q} \alpha\right) d\alpha.
\]
By H\"older's inequality this is
\[
	\leq \prod_{j=1}^6 \left(\int_0^1 |H_j(\alpha)|^6 d\alpha \right)^{\frac{1}{6}}.
\]
The integral
\[
	\int_0^1 |H_j(\alpha)|^6 d\alpha
\]
counts the number of solutions of
\begin{equation}\label{rogSystem}
	qs_2(\mathbf{m}) + 2r_j s_1(\mathbf{m}) = 0.
\end{equation}
Let $s_2(\mathbf{m}) = u$ and $s_1(\mathbf{m}) = v$.  Then (\ref{rogSystem}) becomes $qu + 2r_j v = 0$.  For any solution, we have $|v| \leq \frac{6P}{q}$, and since $(q,r_j)=1$, $v = \frac{v'q}{(q,2)}$.  Thus the number of choices for $v'$ is $\leq 1 + 24P/q^2$, and $u$ is determined by $v'$.

Let
\[
	h(\pmb\alpha) = \sum_{\frac{P-r_j}{q} < m \leq \frac{2P - r_j}{q}} e(\alpha_1 m + \alpha_2 m^2).
\]
For fixed pair $u$, $v$, the number of choices of $\mathbf{m}$ is
\[
	\int_\mathcal{A} |h(\pmb\alpha)|^6 e(-\alpha_1v - \alpha_2u) d\pmb\alpha
\]
\[
	\leq \int_\mathcal{A} |h(\pmb\alpha)|^6 d\pmb\alpha.
\]
But this is the number of solutions of the system
\begin{equation}\nonumber
	\begin{split}
		& s_2(\mathbf{m}) = 0 \\
		& s_1(\mathbf{m}) = 0,
	\end{split}
\end{equation}
so by Lemma~\ref{rogLemma},
\[
	\int_\mathcal{A} |h(\pmb\alpha)|^6 d\pmb\alpha \ll \left(\frac{P}{q}\right)^3 \log P.
\]
So, given $\mathbf{r}$ satisfying the first two equations of (\ref{N2Def}) and $(q,r_j)=1$, the number of solutions to the third equation of (\ref{N2Def}) is
\[
	\ll \left(1 + \frac{P}{q^2}\right) \frac{P^3}{q^3} \log P = P^4q^{-5}\left(1 + \frac{q^2}{P}\right)\log P.
\]
Thus
\[
	N_2(q) \ll N_3(q)P^4q^{-5}(\log P)\left(1 + \frac{q^2}{P}\right).
\]

\end{proof}

\begin{lemma}\label{N3Bound}
Let $N_3(q)$ be as defined in Lemma~\ref{N2ToN3} above.  Then there exists a positive constant $C$ such that
\[
	N_3(q) \ll q^4 \prod_{p | q}\left(1 + \frac{C}{p}\right).
\]
\end{lemma}
\begin{proof}

We begin by observing that $N_3(q)$ is a multiplicative function, and that by orthogonality,
\[
	N_3(p^k) = p^{-2k} \sum_{b_2=1}^{p^k} \sum_{b_3=1}^{p^k} |W(p^k, b_2, b_3)|^6.
\]
Sorting the terms of this sum by value of $(p^k, b_2, b_3) = p^{k-j}$, where $0 \leq j \leq k$, gives
\[
	N_3(p^k) = p^{-2k} \sum_{j=0}^k \mathop{\sum_{a_2=1}^{p^j} \sum_{a_3=1}^{p^j}}_{(p^j,a_2,a_3)=1} |W(p^k, p^{k-j}a_2, p^{k-j}a_3)|^6.
\]

If $j=0$, then
\[
	W(p^k, p^{k-j}a_2, p^{k-j}a_3) = \phi(p^k) = p^k(1-1/p)
\]
and if $j > 0$, then
\[
	W(p^k, p^{k-j}a_2, p^{k-j}a_3) = p^{k-j}W(p^j,a_2,a_3).
\]
Thus
\[
	N_3(p^k) = p^{4k}(1-1/p)^6 + p^{4k} \sum_{j=1}^k \mathop{\sum_{a_2=1}^{p^j} \sum_{a_3=1}^{p^j}}_{(p^j,a_2,a_3)=1} p^{-6j} |W(p^j,a_2,a_3)|^6.
\]

By Lemma~\ref{WqaBound}, 
\[
	\sum_{\substack{\mathbf{a} \\ (p,a_2,a_3)=1}} p^{-6} |W(p,a_2,a_3)|^6 \ll p^{-1},
\]
and for $j \geq 2$,
\[
	\sum_{\substack{\mathbf{a} \\ (p^j,a_2,a_3)=1}} p^{-6j} |W(p^j,a_2,a_3)|^6 \ll p^{-4j+6j/2+j\varepsilon} \ll p^{-j + j\varepsilon}.
\]
Thus
\[
	N_3(p^k) \leq p^{4k}\left(1 + \frac{C}{p}\right)
\]
and the lemma follows by multiplicativity.

\end{proof}

\textit{Proof of Theorem~\ref{huaBound}: }
By Lemma~\ref{IBoundedByJ},
\[
	I_t(P) \ll P^{t-5} + P^\lambda \sup_{\pmb\beta\in\mathcal{A}} J(\pmb\beta).
\]
Bounding $J(\pmb\beta)$ with Lemma~\ref{JBound} yields:
\begin{equation}\label{ItToN}
	I_t(P) \ll P^{t-5} + P^{2\lambda-3}\sum_{q \leq R}\kappa(q)^\lambda q N(q).
\end{equation}
Lemmas~\ref{NToN1},~\ref{N1ToN2}, and~\ref{N2ToN3} succesively bound $N(q)$ in terms of $N_1(q)$, then $N_2(q)$, then $N_3(q)$, and Lemma~\ref{N3Bound} bounds $N_3(q)$.  Collecting these bounds and applying them to (\ref{ItToN}) gives
\[
	I_t(P) \ll P^{t-5} + P^{2\lambda + 1}(\log P)\sum_{q\leq R}\kappa(q)^\lambda \left( P^{-4}q(\log q)^6 + \left(\frac{q^2}{P}+1\right) \prod_{p|q}\left(1 + \frac{C}{p}\right)\right).
\]

Since $q \leq R = P^{\frac{1}{2}+\delta}$,
\[
	P^{-4}q(\log q)^6 \ll P^{-3} \ll 1,
\]
and
\[
	\frac{q^2}{P} \leq q^{\frac{4\delta}{1+2\delta}},
\]
so we have
\begin{equation}\label{ItTokappaSum}
I_t(P) \ll P^{t-5} + P^{2\lambda + 1}(\log P)\sum_{q\leq R}\kappa(q)^\lambda q^{\frac{4\delta}{1+2\delta}} \prod_{p|q}\left(1 + \frac{C}{p}\right).
\end{equation}

We now desire a bound on
\[
	\sum_{q\leq R}\kappa(q)^\lambda q^{\frac{4\delta}{1+2\delta}} \prod_{p|q}\left(1 + \frac{C}{p}\right).
\]
Since $\kappa$ is multiplicative, it suffices to bound
\[
	\prod_{p\leq R}\left(1 + \sum_{j=1}^{\infty}\kappa(p^j)^\lambda p^{j\frac{4\delta}{1+2\delta}}\right).
\]
We have
\[
	\sum_{j=1}^{\infty}\kappa(p^j)^\lambda p^{j\frac{4\delta}{1+2\delta}} \ll p^{-5/4} + p^{-3/2} + \sum_{j=3}^{\infty} p^{-\frac{2}{3}j}
\]
\[
	\ll p^{-5/4}.
\]
Thus
\[
	\prod_{p\leq R}\left(1 + \left(1+\frac{C}{p}\right)\sum_{j=1}^\infty\kappa(p^j)^\lambda p^{j\frac{4\delta}{1+2\delta}}\right) \ll \prod_{p\leq R}(1 + Cp^{-5/4}) \ll 1,
\]
which implies that
\begin{equation}\label{kappaSumBound}
	\sum_{q\leq R}\kappa(q)^\lambda q^{\frac{4\delta}{1+2\delta}} \prod_{p|q}\left(1 + \frac{C}{p}\right) \ll 1.
\end{equation}

Applying (\ref{kappaSumBound}) to (\ref{ItTokappaSum}) yields
\[
	I_t(P) \ll P^{t-5} + P^{2\lambda+1}(\log P)
\]
Which, upon applying the definiton of $\lambda$ in (\ref{lambdaDef}), is
\[
	I_t(P) \ll P^{t-5}(\log P).
\]
\qed

\section{A Pointwise Minor Arc Bound Sensitive to Multiple Coefficients}\label{sec:Fbound}

Let $\pmb\alpha = (\alpha_1, \ldots, \alpha_k)$ and let
\[
	F_k(\pmb\alpha) = \sum_{n \leq P} \Lambda(n) e(\alpha_1 n + \alpha_2 n^2 + \ldots + \alpha_k n^k).
\]
This section consists of the proof of the following theorem and corollary:

\begin{theorem}\label{Sbound} For $D > 0$, where $D = D(A)$ can be made arbitrarily large by increasing $A$, if $(\alpha_2, \alpha_3) \in \mathfrak{m}$, then
\[
		F_3(\pmb\alpha) \ll P(\log P)^{-D}.
\]
\end{theorem}

\begin{corollary}\label{fiBound} For each $i$, $1\leq i \leq s$,
\[
	\sup_{(\alpha, \beta)\in\mathfrak{m}} f_i(\alpha, \beta) \ll P(\log P)^{-D}.
\]
\end{corollary}

\begin{proof}
Take $\alpha_2 = u_i\alpha$, $\alpha_3 = v_i\beta$, and $\alpha_1 = 0$ in Theorem~\ref{Sbound}, and note that multiplying $\alpha$ and $\beta$ by the integer coefficients $u_i$ and $v_i$ does not move them out of $\mathfrak{m}$ and that there are trivially $\ll P^{1/2}\log P$ prime powers $\leq P$ which contribute $\ll P^{1/2}(\log P)^2$ to the sum.
\end{proof}

We begin by citing some known results on Vinogradov's mean value theorem.  Let
\[
	J_{s,k}(P) = \int_{[0,1)^k} |F_k(\pmb\alpha)|^{2s} d\pmb\alpha.
\]
We cite the bound
\begin{equation}\label{J32bound}
	J_{3,2}(P) \ll P^3 \log P
\end{equation}
from~\cite{rogovskaya} (cf.~\cite{vaughanHLM} chap. 7 exercise 2) and for $s > 6$
\begin{equation}\label{Js3bound}
	J_{s,3}(P) \ll P^{2s-6}
\end{equation}
from equation (7) of~\cite{bdg}.

Let $X = (\log P)^B$ for some $B > 0$ to be fixed later.  For brevity, we let $f(n) := e(\alpha_1n + \alpha_2n^2 + \alpha_3n^3)$.  Then
\[
	F_3(\pmb\alpha) = \sum_{n \leq P} \Lambda(n)f(n).
\]

Applying Vaughan's identity~\cite{vaughansIdentity} to this sum yields

\begin{equation}\label{vaughansIdentityBreakdown}
	\sum_{n \leq P} \Lambda(n)f(n) = S_1 + S_2 + S_3 + S_4,
\end{equation}
where
\[
	S_1 = \sum_{n \leq X} \Lambda(n)f(n),
\]
\[
	S_2 = \sum_{n \leq P} \left( \sum_{\substack{kl=n \\ k \leq X}} \mu(k) \log l \right) f(n),
\]
\[
	S_3 = \sum_{n \leq P} \sum_{\substack{kl=n \\ k \leq X^2}} \left( \sum_{\substack{m,n \\ mn=k \\ m \leq X, n \leq X}} \Lambda(m) \mu(n) \right) f(n),
\]
\[
	S_4 = \sum_{n \leq P} \left( \sum_{\substack{kl=n \\ k>X, l>X}} a(k)b(l) \right) f(n),
\]
with
\[
	a(k) = \sum_{\substack{l | k \\ l>X}} \Lambda(l),
\]
\[
	b(l) = \begin{cases} \mu(l), & l>X \\ 0, & l \leq X. \end{cases}
\]

This now enables us to bound each of the sums $S_1$, $S_2$, $S_3$, $S_4$ individually to obtain the desired bound on $F_3(\pmb\alpha)$.  The bounds on these four sums constitute Lemmas~\ref{S1bound}-\ref{S4lemma}.

\begin{lemma}\label{S1bound}
\begin{equation}
	S_1 \ll X.
\end{equation}
\end{lemma}
\begin{proof}
Since $|f(n)| \ll 1$,
\[
	S_1 = \sum_{n \leq X} \Lambda(n)f(n) \ll \sum_{m \leq X} \Lambda(n) \ll X,
\]
where the last bound is a classical result of Chebyshev.
\end{proof}

\begin{lemma}\label{S3bound}
\[
	S_3 \ll P(\log P)^{B - A/12 + 4}.
\]
\end{lemma}
\begin{proof}

\begin{equation}\label{S3def}
	S_3 = \sum_{n \leq P} \sum_{\substack{kl=n \\ k \leq X^2}} \left( \sum_{\substack{m_1,m_2 \\ m_1m_2=k \\ m_1 \leq X, m_2 \leq X}} \Lambda(m_1) \mu(m_2) \right) f(m_2).
\end{equation}
Let
\[
	c_3(k) := \sum_{\substack{m_1,m_2 \\ m_1m_2=k \\ m_1 \leq X, m_2 \leq X}} \Lambda(m_1) \mu(m_2)
\]
and note for future reference that
\[
	|c_3(k)| \leq \sum_{m|k}\Lambda(m) = \log k.
\]
Interchanging the order of summation in (\ref{S3def}) yields
\[
	S_3 = \sum_{k \leq X^2} c_3(k) \sum_{l \leq P/k} f(kl)
\]
\begin{equation}\label{typeISum}
	= \sum_{k \leq X^2} c_3(k) \sum_{l \leq P/k} e(\alpha_1 kl + \alpha_2 k^2 l^2 + \alpha_3 k^3 l^3).
\end{equation}
We now use Dirichlet's theorem on Diophantine approximation to obtain integers $b_j$, $q_j$ for $j \in \{2,3\}$ such that $(b_j, q_j)=1$,
\begin{equation}\label{alphajApprox}
	|\alpha_j k^j - \frac{b_j}{q_j}| \leq \frac{(\log (P/k))^{A/2}}{q_j(P/k)^j},
\end{equation}
\[
	q_j \leq \frac{(P/k)^j}{(\log (P/k))^{A/2}}.
\]
Assume for contradiction that $q_j \leq (\log (P/k))^{A/2}$ for both $j=2$ and $j=3$ and rewrite (\ref{alphajApprox}) as
\[
	|\alpha_j - \frac{b_j}{k^j q_j}| \leq \frac{(\log (P/k))^{A/2}}{q_j P^j}.
\]
Let $b_j' = b_j/(k^j,b_j)$, $q_j' = k^j q_j/(k^j,b_j)$.  Then
\[
	|\alpha_j - \frac{b_j}{k^j q_j}| \leq \frac{(\log (P/k))^{A/2}}{q_j' P^j},
\]
$(b_j',q_j')=1$, and $q_j' \leq (\log(P/k))^{A/2}$ for $j \in \{2,3\}$.  Let $q = \text{lcm}(q_2',q_3')$ and $a_j = b_j'q/q_j$.  Then  $(a_2,a_3,q)=1$, $q \leq (\log(P/k))^A$, and
\[
	|\alpha_j - \frac{a_j}{q}| \leq \frac{(\log (P/k))^A}{qP^j}.
\]
This implies that $(\alpha_2, \alpha_3) \in \mathfrak{M}$.  However, we have $(\alpha_2, \alpha_3) \in \mathfrak{m}$, which is the desired contradication, so we may assume that $q_j > (\log (P/k))^{A/2}$ for at least one $j_0 \in \{2,3\}$.

We now need a bound on
\[
	H(\pmb\alpha', P/k) := \sum_{l \leq P/k} e(\alpha_1' l + \alpha_2'l^2 + \alpha_3'l^3).
\]
By Theorem 5.2 of~\cite{vaughanHLM}, we have
\[
	H(\pmb\alpha', P/k) \ll (\log P) \left(J_{3,2}(2P) \left(\frac{P}{k}\right)^3 \prod_{j=1}^3\left(\frac{1}{q_j'} + \frac{k}{P} + \frac{q_j'k^j}{P^j} \right) \right)^{1/6}.
\]
Now by~(\ref{J32bound}), we have $J_{3,2}(P) \ll P^3(\log P)$, so
\begin{equation}\label{HaBound}
	H(\pmb\alpha', P/k) \ll \frac{P}{k} (\log P)^2 \prod_{j=1}^3 \left(\frac{1}{q_j'} + \frac{k}{P} + \frac{q_j'k^j}{P^j}\right)^{1/6}.
\end{equation}
Now $\frac{k}{P} \ll P^{-1/2}$, $1/q_j' \ll 1$, and $\frac{q_j'k^j}{P^j} \ll (\log P)^{2jB-A}$, so for $j \neq j_0$
\begin{equation}\label{notj0Bound}
	\frac{1}{q_j'} + \frac{k}{P} + \frac{q_j'k^j}{P^j} \ll 1,
\end{equation}
assuming $2Bj - A < 0$.
For $j = j_0$ we have $1/q_{j_0}' \ll (\log P)^{2Bj_0 - A/2}$, so
\begin{equation}\label{j0Bound}
	\frac{1}{q_j'} + \frac{k}{P} + \frac{q_j'k^j}{P^j} \ll (\log P)^{2Bj_0 - A/2},
\end{equation}
assuming $2Bj_0 - A/2 < 0$.

Applying the bounds of (\ref{notj0Bound}) and (\ref{j0Bound}) to (\ref{HaBound}) yields
\[
	H(\pmb\alpha', P/k) \ll \frac{P}{k} (\log P)^2 (\log P)^{(2Bj_0 - A/2)/12}
\]
\begin{equation}\label{HaABBound}
	\ll \frac{P}{k} (\log P)^{B - A/12 + 2},
\end{equation}
since $j_0 \leq 3$.

Substituting the bound of (\ref{HaABBound}) into (\ref{typeISum}), we obtain
\[
	S_3 \ll \sum_{k \leq X^2} (\log k) \frac{P}{k} (\log P)^{B - A/12 + 2}
\]
\[
	\ll P(\log P)^{B - A/12 + 4}.
\]

\end{proof}

\begin{lemma}\label{S2lemma}
\[
	S_2 \ll P(\log P)^{B-A/12+4}.
\]
\end{lemma}
\begin{proof}
\[
	S_2 = \sum_{n \leq P} \left( \sum_{\substack{kl=n \\ k \leq X}} \mu(k) \log l \right) f(n)
\]
\[
	= \sum_{k \leq X} \mu(k) \sum_{l < P/k} f(kl) \int_1^l \frac{dt}{t}
\]
\[
	= \sum_{k \leq X} \mu(k) \int_1^{P/k} \sum_{l < P/k} f(kl) \frac{dt}{t}
\]
\begin{equation}\label{S2breakdown}
	= \int_1^{P/k} \left(\sum_{k \leq X} \mu(k) \sum_{l < P/k} f(kl) \right) \frac{dt}{t}.
\end{equation}
Now by (\ref{HaABBound}),
\[
	\sum_{l < P/k} f(kl) = H(\pmb\alpha', P/k) \ll \frac{P}{k} (\log P)^{B - A/12 + 2}.
\]
Substituting this into (\ref{S2breakdown}) yields
\[
	S_2 \ll \int_1^{P/k} \sum_{k \leq X} \frac{P}{k} (\log P)^{B - A/12 + 2} \frac{dt}{t}
\]
\[
	\ll P(\log P)^{B - A/12 + 2} \left(\sum_{k \leq X} \frac{\mu(k)}{k}\right) \int_1^{P/k} \frac{dt}{t}
\]
\[
	\ll P(\log P)^{2 + B - A/12} (\log X) (\log P/k)
\]
\[
	\ll P(\log P)^{4 + B - A/12}.
\]
\end{proof}

\begin{lemma}\label{S4lemma}
\[
	S_4 \ll P(\log P)^{4-\min\{A,B\}/(4b^2)}
\]
\end{lemma}
\begin{proof}
We begin by splitting $S_4$ into dyadic ranges.  Let $\mathcal{M} = \{X2^k : 0 \leq k, 2^k \leq P/X^2\}$.  Then
\begin{equation}\label{S4MDef}
	S_4 = \sum_{M \in \mathcal{M}} S_4(M),
\end{equation}
where 
\[
	S_4(M) = \sum_{M < k \leq 2M} \sum_{l \leq P/k} a(k)b(l)f(kl).
\]

Our goal is now to replace the sum over the range $l \leq P/k$ with one over the range $l \leq P/M$.  We begin by considering the integral
\[
	I(x) := \int_\mathbb{R} \frac{\sin(2\pi Rt)}{\pi t}e(-xt)dt,
\]
where $R>0$ is a constant.  Computing the integral via the residue theorem gives
\[
	I(x) = \begin{cases} 1, & |x|<R \\ 0, & |x|>R.\end{cases}
\]
Now for $x \neq R$, $t \geq 1$,
\begin{equation}\label{Tintegral}
	\int_{|t|>T} \frac{\sin(2\pi Rt)}{\pi t}e(-xt)dt = \int_{|t|>T} \frac{e\big((R-x)t\big)-e\big(-(R+x)t\big)}{2\pi it} dt.
\end{equation}
Integrating the right hand side of (\ref{Tintegral}) by parts gives
\[
	\int_{|t|>T} \frac{\sin(2\pi Rt)}{\pi t}e(-xt)dt \ll \frac{1}{T|R-x|} + \frac{1}{T|R+x|} + \frac{1}{T^3} \ll \frac{1}{T\big|R-|x|\big|}.
\]
Thus we can rewrite $I(x)$ as an integral over $[-T,T]$ with an acceptable error term:
\[
	I(x) = \int_{-T}^T \frac{\sin(2\pi Rt)}{\pi t}e(-xt)dt + O\left(\frac{1}{T\big|R-|x|\big|}\right).
\]

We now take $R = \log(\lfloor P\rfloor + \frac{1}{2})$, $x = \log(kl)$, giving us
\[
	S_4(M) = \sum_{M < k \leq 2M} \sum_{l \leq P/M} a(k)b(l)f(kl)I(\log(kl))
\]
\[
	 = \int_{-T}^T \sum_{M < k \leq 2M} \sum_{l \leq P/M} \frac{a(k)b(l)}{(kl)^{2\pi it}} f(kl) \frac{\sin(2\pi Rt)}{\pi t} dt + O\left(\frac{P^2\log P}{T}\right).
\]
Now 
\[
	\frac{\sin(2\pi Rt)}{\pi t} \ll \frac{1}{\pi t} \ll \frac{1}{|t|}
\]
and
\[
	\frac{\sin(2\pi Rt)}{\pi t} \ll \frac{2\pi Rt}{\pi t} \ll R,
\]
so
\[
	\frac{\sin(2\pi Rt)}{\pi t} \ll \min(R, 1/|t|).
\]
Take $T = P^3$, $a(k,t) = a(k)k^{-2\pi it}$, $b(l,t) = b(l)l^{-2\pi it}$, and let
\begin{equation}\label{S4MtDef}
	S_4(M, t) = \sum_{M < k \leq 2M} \sum_{l \leq P/k} a(k, t)b(l, t)f(kl).
\end{equation}
Then
\[
	S_4(M) \ll \sup_{|t|<T}|S_4(M,t)| \int_{-T}^T \frac{\sin(2\pi Rt)}{\pi t} dt
\]
\[
	\ll 1 + (\log P)\sup_{|t|<T}|S_4(M,t)|.
\]

We now consider $S_4(M,t)$.  Let $b>6$.  By H\"older's inequality
\begin{equation}\label{procedureStart}
	S_4(M,t)^{2b} \ll \left(\sum_{M < k \leq 2M} |a(k,t)|^\frac{2b}{2b-1} \right)^{2b-1} \sum_{M < k \leq 2M} \left| \sum_{l \leq P/M} b(l,t)f(kl) \right|^{2b}.
\end{equation}
Now $|a(k,t)| = |a(k)| \leq \log k \ll \log M \ll \log P$, so
\[
	S_4(M,t)^{2b} \ll \left( M(\log P)^\frac{2b}{2b-1}\right)^{2b-1} \sum_{M < k \leq 2M} \left| \sum_{l \leq P/M} b(l,t)f(kl) \right|^{2b}
\]
\begin{equation}\label{2b}
	\ll (\log P)^{2b} M^{2b-1} \sum_{M < k \leq 2M} \left| \sum_{l \leq P/M} b(l,t)f(kl) \right|^{2b}.
\end{equation}
Expanding the $2b$-th power in (\ref{2b}) yields
\[
	\left| \sum_{l \leq P/M} b(l,t)f(kl) \right|^{2b}
\]
\begin{equation}\label{introsl}
	= \sum_{\substack{\mathbf l \\ l_j \leq P/M}} \left( \prod_{i=1}^b b(l_i,t) \prod_{i=b+1}^{2b} \overline{b(l_i,t)} \right) e\left(\alpha_1ks_1(\mathbf l) + \alpha_2 k^2 s_2(\mathbf l) + \alpha_3 k^3 s_3(\mathbf l)\right)
\end{equation}
where
\[
	s_j(\mathbf l) = l_1^j + \ldots + l_b^j - l_{b+1}^j - \ldots - l_{2b}^j.
\]
Collecting terms in (\ref{introsl}) by values of $s_j$ yields
\begin{equation}\label{bfSumBound}
	\left| \sum_{l \leq P/M} b(l,t)f(kl) \right|^{2b} = \sum_{\substack{\mathbf{v} \\ |v_j| \leq bP^j}} R_1(\mathbf{v}) e(\alpha_1kv_1 + \alpha_2k^2v_2 + \alpha_3k^3v_3)
\end{equation}
where
\[
	R_1(\mathbf{v}) = \sum_{\substack{\mathbf{l} \\ l_j \leq P/M \\ \mathbf{s}(\mathbf{l}) = \mathbf{v}}} \prod_{i=1}^b b(l_i,t) \prod_{i=b+1}^{2b} \overline{b(l_i,t)} \ll J_{b,3}(P/M) \ll (P/M)^{2b-6}
\]
by~(\ref{Js3bound}).
Substituting (\ref{bfSumBound}) into (\ref{2b}) yields
\[
	S_4(M,t)^{2b} \ll (\log P)^{2b}M^{2b-1} \sum_{\substack{\mathbf{v} \\ |v_j| \leq bP^jM^{-j}}} R_1(\mathbf{v}) \sum_{M < k \leq 2M} e(\alpha_1kv_1 + \alpha_2k^2v_2 + \alpha_3k^3v_3)
\]
\begin{equation}\label{procedureEnd}
	\ll (\log P)^{2b}M^5 P^{2b-6} \sum_{\substack{\mathbf{v} \\ |v_j| \leq bP^jM^{-j}}} \sum_{M < k \leq 2M} e(\alpha_1kv_1 + \alpha_2k^2v_2 + \alpha_3k^3v_3).
\end{equation}
We now repeat the procedure followed from (\ref{procedureStart}) to (\ref{procedureEnd}).  By H\"older's inequality
\begin{multline}
	S_4(M,t)|^{4b^2} \ll \left( (\log P)^{2b} M^5 P^{2b-6} \right)^{2b} \left( \sum_{\substack{\mathbf{v} \\ |v_j| \leq bP^jM^{-j}}} 1^\frac{2b}{2b-1} \right)^{2b-1} \\ \times \sum_{\substack{\mathbf{v} \\ |v_j| \leq bP^jM^{-j}}} \left| \sum_{M < k \leq 2M} e(\alpha_1kv_1 + \alpha_2k^2v_2 + \alpha_3k^3v_3) \right|^{2b}
\end{multline}
\[
	\ll (\log P)^{4b}M^{10b}P^{4b^2-12b} \left( b^3P^6M^{-6} \right)^{2b-1}\sum_{\substack{\mathbf{v} \\ |v_j| \leq bP^jM^{-j}}} \left| \sum_{M < k \leq 2M} e(\alpha_1kv_1 + \alpha_2k^2v_2 + \alpha_3k^3v_3) \right|^{2b}
\]
\begin{equation}\label{orNot2b}
	\ll (\log P)^{4b^2} M^{6-2b} P^{4b^2-6} \sum_{\substack{\mathbf{v} \\ |v_j| \leq bP^jM^{-j}}} \left| \sum_{M < k \leq 2M} e(\alpha_1kv_1 + \alpha_2k^2v_2 + \alpha_3k^3v_3) \right|^{2b}.
\end{equation}
We expand the $2b$-th power in (\ref{orNot2b}) and collect like terms. Thus
\[
	\left| \sum_{M < k \leq 2M} e(\alpha_1kv_1 + \alpha_2k^2v_2 + \alpha_3k^3v_3) \right|^{2b}
\]
\[
	= \sum_{\substack{\mathbf{k} \\ M < k_j \leq 2M}} e(\alpha_1s_1(\mathbf{k})v_1 + \alpha_2s_2(\mathbf{k})v_2 + \alpha_3s_3(\mathbf{k})v_3)
\]
\begin{equation}\label{eSumBound}
	= \sum_{\substack{\mathbf{u} \\ |u_j| \leq b2^jM^j}} R_2(\mathbf{u}) e(\alpha_1u_1v_1 + \alpha_2u_2v_2 + \alpha_3u_3v_3)
\end{equation}
where
\[
	R_2(\mathbf{u}) = \sum_{\substack{\mathbf{k} \\ M < k_j \leq 2M \\ \mathbf{s}(\mathbf{k}) = \mathbf{u}}} 1 \ll J_{b,3}(2M) \ll M^{2b-6}
\]
by~(\ref{Js3bound}).
Substituting (\ref{eSumBound}) into (\ref{orNot2b}), we obtain
\[
	S_4(M,t)^{4b^2} \ll (\log P)^{4b^2} P^{4b^2-6} \sum_{\substack{\mathbf{u} \\ |u_j| \leq b2^jM^j}} \left| \sum_{\substack{\mathbf{v} \\ |v_j| \leq bP^jM^{-j}}} e(\alpha_1u_1v_1 + \alpha_2u_2v_2 + \alpha_3u_3v_3) \right|.
\]
Summing over each of the $v_j$ gives
\[
	S_4(M,t)^{4b^2} \ll (\log P)^{4b^2} P^{4b^2-6} \sum_{\substack{\mathbf{u} \\ |u_j| \leq b2^jM^j}} \prod_{j=1}^3 \min\left(\frac{P^j}{M^j},\frac{1}{\|\alpha_ju_j\|}\right).
\]
Applying Lemma 2.2 of~\cite{vaughanHLM} yields
\begin{equation}\label{S4MtBound}
	S_4(M,t)^{4b^2} \ll (\log P)^{4b^2+3} P^{4b^2} \prod_{j=1}^3 \left(\frac{1}{q_j} + \frac{1}{M^j} + \frac{M^j}{P^j} + \frac{q_j}{P^j}\right).
\end{equation}
Combining (\ref{S4MtBound}) with (\ref{S4MDef}) and (\ref{S4MtDef}), we obtain
\[
	S_4 \ll P(\log P)^4 \prod_{j=1}^3 \left(\frac{1}{q_j} + \frac{1}{X^j} + \frac{q_j}{P^j}\right)^{1/(4b^2)}.
\]
Recalling that $q_j > (\log P)^A$ for some $j$ and $X = (\log P)^B$, this is
\begin{equation}\label{S4bound}
	S_4 \ll P (\log P)^{4-\min(A,B)/(4b^2)}
\end{equation}
for $b > 6$.
\end{proof}

\textit{Proof of Theorem~\ref{Sbound}}: Using the Vaughan's identity breakdown of (\ref{vaughansIdentityBreakdown}) and the estimates for the $S_i$ found in Lemmas~\ref{S1bound},~\ref{S3bound},~\ref{S2lemma}, and~\ref{S4lemma}, we have
\[
	F_3(\pmb\alpha) = S_1 + S_2 + S_3 + S_4
\]
\[
	\ll (\log P)^B + P(\log P)^{B - A/12 + 4} + P(\log P)^{B-A/12+4} + P (\log P)^{4-\min(A,B)/(4b^2)}.
\]
So, taking $B > 4b^2D(D+4)$ and $A > 12(B + D + 4)$ for some $D > 0$ yields
\[
	F_3(\pmb\alpha) \ll P(\log P)^{-D}.
\]
\qed

\section{Major Arc Approximations}\label{sec:majorArcs}

On a typical major arc $\mathfrak{M}(a,b,q)$, let $\alpha = \frac{a}{q}+\theta$, $\beta = \frac{b}{q}+\omega$, with $\theta < \frac{Q}{qP^2}$, $\omega < \frac{Q}{qP^3}$, and $q < Q$.  For ease of notation, let $\frac{Q}{qP^2} = \Theta$, $\frac{Q}{qP^3} = \Omega$.  Let
\[
	W_i(q,a,b) = \sum_{\substack{r=1 \\ (r,q)=1}}^q e\left(\frac{au_ir^2 + bv_ir^3}{q}\right),
\]
\[
	f_i^*(\alpha, \beta) = \frac{1}{\phi(q)} W_i(q,a,b) \int_0^P e(\theta u_i x^2 + \omega v_i x^3) dx,
\]
\[
	T_i(x,a,b) = \sum_{p \leq x}(\log p) e\left(\frac{au_ip^2 + bv_ip^3}{q}\right),
\]
and for $x > \sqrt{P}$,
\[
	T_i^*(x,a,b) = \sum_{\sqrt{P} < p \leq x}(\log p) e\left(\frac{au_ip^2 + bv_ip^3}{q}\right).
\]

We begin with preliminary bounds on $T_i(x,a,b)$ and $T_i^*(q,a,b)$.
\begin{lemma}\label{SWLemma}
\[
	T_i(x,a,b) = \frac{x}{\phi(q)}W_i(q,a,b) + O(x\exp(-C(\log x)^{1/2})).
\]
\end{lemma}
\begin{proof}
The exponential function $e((au_ip^2 + bv_ip^3)/q)$ is only sensitive to the residue class of $p$ modulo $q$, so
\[
	T_i(x,a,b) = \sum_{\substack{r=1 \\ (r,q)=1}}^q\sum_{\substack{p \leq x \\ p \equiv r \pmod* q}} (\log p) e\left(\frac{au_ir^2 + bv_ir^3}{q}\right) + O(q^\varepsilon \log q)
\]
\[
	= \sum_{\substack{r = 1\\(r,q)=1}}^q \left(e\left(\frac{au_ir^2 + bv_ir^3}{q}\right) \sum_{\substack{p \leq x\\p\equiv r\pmod* q}} \log p \right) + O(q^\varepsilon \log q).
\]
Now by the Siegel-Walfisz theorem we have that
\begin{equation}\label{siegelWalfisz}
	\sum_{\substack{p \leq x\\p\equiv r\pmod*{q}}} \log p = \frac{x}{\phi(q)} + O(x\exp(-C(\log x)^{1/2})),
\end{equation}
so  
\[
	T_i(x,a,b) = \sum_{\substack{r = 1\\(r,q)=1}}^q \left(e\left(\frac{au_ir^2 + bv_ir^3}{q}\right)\left(\frac{x}{\phi(q)} + O(x\exp(-C(\log x)^{1/2}))\right)\right)
\]
\[
	= \frac{x}{\phi(q)} W_i(q,a,b) + W_i(q,a,b)\left(O(x\exp(-C(\log x)^{1/2}))\right)
\]
\[
	= \frac{x}{\phi(q)}W_i(q,a,b) + O(x\exp(-C(\log x)^{1/2})).
\]
\end{proof}

\begin{corollary}\label{TStarBound} For $x > \sqrt{P}$,
\[
	T_i^*(x,a,b) = \frac{x}{\phi(q)}W_i(q,a,b) + O(x\exp(-C(\log x)^{1/2})).
\]
\end{corollary}
\begin{proof}
\[
	T_i^*(x,a,b) = T_i(x,a,b) - \sum_{p \leq \sqrt{P}}(\log p) e\left(\frac{au_ip^2 + bv_ip^3}{q}\right)
\]
\[
	= \frac{x}{\phi(q)}W_i(q,a,b) + O(x\exp(-C(\log x)^{1/2})) + O(P^{1/2})
\]
\[
	\frac{x}{\phi(q)}W_i(q,a,b) + O(x\exp(-C(\log x)^{1/2})).
\]
\end{proof}

\begin{lemma}\label{majorApprox}
On $\mathfrak{M}(q,a,b)$,
\[
	f_i(\alpha, \beta) = f_i^*(\alpha, \beta) + O(P\exp(-C(\log P)^{1/2}))
\]

for some positive constant $C$.
\end{lemma}

\begin{proof}
First, we isolate the range $(\sqrt{P}, P]$, bounding the remainder immediately.
\[
	|f_i(\alpha, \beta) - f_i^*(\alpha, \beta)|
\]
\[
	= \left|\sum_{p \leq P} (\log p)e(\alpha u_i p^2 + \beta v_i p^3) - \frac{1}{\phi(q)}W_i(q,a,b)\int_0^P e(\theta u_i x^2 + \omega v_i x^3) dx \right|
\]
\begin{multline}\nonumber
	= \left|\sum_{\sqrt{P} < p \leq P} (\log p)e(\alpha u_i p^2 + \beta v_i p^3) -  \frac{1}{\phi(q)}W_i(q,a,b)\int_{\sqrt{P}}^P e(\theta u_i x^2 + \omega v_i x^3) dx \right| \\
	+ O(P^{1/2}\log P).
\end{multline}

Now
\[
	\left|\sum_{\sqrt{P} < p \leq P} (\log p)e(\alpha u_i p^2 + \beta v_i p^3) - \frac{1}{\phi(q)}W_i(q,a,b)\int_{\sqrt{P}}^P e(\theta u_i x^2 + \omega v_i x^3) dx \right|
\]

\begin{multline}
	= \Bigg| W_i(q,a,b)  \sum_{\substack{\sqrt{P} < p \leq P \\ p \equiv r \pmod* q}} (\log p)e(\theta u_i p^2 + \omega v_i p^3)  \\ - \frac{1}{\phi(q)}W_i(q,a,b)\int_{\sqrt{P}}^P e(\theta u_i x^2 + \omega v_i x^3) dx \Bigg|
\end{multline}

\begin{multline}\label{needsAbel}
	= \sum_{\sqrt{P} < m \leq P}  \bigg[ (\log m)e\left(\frac{au_im^2+bv_im^3}{q}\right) \mathbb{1}_\mathcal{P} \\ - \frac{1}{\phi(q)} W_i(q,a,b) \bigg] e(\theta u_i m^2 + \omega v_i m^3),
\end{multline}
where $\mathbb{1}_\mathcal{P}$ is the indicator function of the primes.  

We now apply Abel summation to (\ref{needsAbel}), with the term in square brackets serving as the coefficient.  This yields that
\[
	|f_i(\alpha, \beta) - f_i^*(\alpha, \beta)|
\]
\begin{equation}\label{needsSW}
\begin{split}
	 =~ & e(\theta u_i P^2 + \omega v_i P^3) \bigg( T_i(x,a,b) - \frac{1}{\phi(q)}\sum_{\sqrt{P} < m \leq P}W_i(q,a,b)\bigg) \\ & -\int_{\sqrt{P}}^P 2\pi i(2\theta u_i x + 3\omega v_i x^2) \bigg(T_i(x,a,b) - \frac{1}{\phi(q)}\sum_{\sqrt{P} < m \leq x}W_i(q,a,b) \bigg) dx.
\end{split}
\end{equation}

Now Lemma~\ref{TStarBound} gives that for $x \geq \sqrt{P}$,
\[
	T_i^*(x,a,b) - \frac{1}{\phi(q)}\sum_{\sqrt{P} < m \leq x}W_i(q,a,b) \ll x\exp(-C(\log x)^{1/2}),
\]

so
\[
	|f_i(\alpha, \beta) - f_i^*(\alpha, \beta)|
\]

\begin{equation}\nonumber
\begin{split}
	=~ & e(\theta u_i P^2 + \omega v_i P^3) \left(O(\phi(q) P \exp(-C(\log P)^{1/2}))\right) 
	\\ & - 2\pi i\int_0^P (2\theta u_i x + 3\omega v_i x^2) \left(O(\phi(q) x \exp(-C(\log P)^{1/2}))  \right) dx
	\\ & + O(P^{1/2}\log P)
\end{split}
\end{equation}

\[
	\ll (1 + |\theta|P^2 + |\omega|P^3) \phi(q) P \exp(-C(\log P)^{1/2})
\]

\[
	\ll (\log P)^A \frac{\phi(q)}{q} P \exp(-C(\log P)^{1/2})
\]

\[
	\ll P \exp(-C(\log P)^{1/2}).
\]

\end{proof}

For clarity of notation, let
\[
	A(q) = \mathop{\sum_{a=1}^q \sum_{b=1}^q}_{(a,b,q)=1} \frac{1}{\phi(q)^s} \prod_{i=1}^s W_i(q,a,b),
\]
\[
	\mathfrak{S}(Q) = \sum_{q<Q} A(q),
\]
\[
	J(Q) = \int_{|\theta|<Q/P^2} \int_{|\omega|<Q/P^3} \prod_{i=1}^s \int_0^P  e(\theta u_i x^2 + \omega v_i x^3) dx d\omega d\theta.
\]

We are now able to state the primary lemma of this section:

\begin{lemma}\label{singularDecomp}
\[
	R(P) = \mathfrak{S}(Q)J(Q) + O(P^{s-5}(\log P)^{-E}).
\]
\end{lemma}
\begin{proof}

We first introduce the inhomogeneous major arcs 
\[
	\mathfrak{B}(q, \mathbf{r}, Q) = \{(a,b) : |\alpha - a/q| < \frac{Q}{P^2}, |\beta - b/q| < \frac{Q}{P^3}\}
\]
for $1 \leq Q \leq P$, $q < Q$, $1 \leq a \leq q$, $1 \leq b \leq q$, and $(a, b, q) = 1$.  Note that $\mathfrak{M} \subseteq \mathfrak{B}$ and thus $\mathfrak{B} \setminus \mathfrak{M} \subseteq \mathfrak{m}$.

It follows immediately from Lemma~\ref{majorApprox} that
\begin{equation}\label{fProdBound}
	\left|\prod_{i=1}^s f_i(\alpha, \beta) - \prod_{i=1}^s f_i^*(\alpha, \beta)\right| \ll P^s\exp(-C(\log P)^{1/2}).
\end{equation}
Summing (\ref{fProdBound}) over all inhomogeneous major arcs gives

\[
	\int_\mathfrak{B} \left|\prod_{i=1}^s f_i(\alpha, \beta) - \prod_{i=1}^s f_i^*(\alpha, \beta)\right| d\alpha d\beta
\]
\[
	 = \sum_{q<Q} \mathop{\sum_{a=1}^q \sum_{b=1}^q}_{(a,b,q)=1} \int_{\mathfrak{B}(a,b,q)} \left|\prod_{i=1}^s f_i(\alpha, \beta) - \prod_{i=1}^s f_i^*(\alpha, \beta)\right| d\alpha d\beta
\]
\[
 \ll \sum_{q<Q} \mathop{\sum_{a=1}^q \sum_{b=1}^q}_{(a,b,q)=1} \int_{-Q/P^2}^{Q/P^2} \int_{-Q/P^3}^{Q/P^3} P^s\exp(-C(\log P)^{1/2}) d\alpha d\beta
\]
\begin{equation}\label{f-f*Bound}
 \ll Q^3P^{s-5}\exp(-C(\log P)^{1/2}).
\end{equation}
We now wish to compute
\[
	\int_\mathfrak{B} \prod_{i=1}^s f_i^*(\alpha, \beta) d\alpha d\beta
\]
\[
	 = \sum_{q<Q} \mathop{\sum_{a=1}^q \sum_{b=1}^q}_{(a,b,q)=1}  \prod_{i=1}^s \frac{1}{\phi(q)} W_i(q,a,b) \int_{-Q/P^2}^{Q/P^2} \int_{-Q/P^3}^{Q/P^3} \int_0^P e(\theta u_i x^2 + \omega v_i x^3) dx d\theta d\omega + O(P^{s-5}(\log P)^{-E})
\]
\begin{equation}\label{ugly}
	 = \mathfrak{S}(Q) J(Q) + O(P^{s-5}(\log P)^{-E}).
\end{equation}
Combining (\ref{f-f*Bound}) and (\ref{ugly}) yields the inhomogeneous major arc bound
\begin{equation}\label{majorArcBreakdown}
	\int_\mathfrak{B} \prod_{i=1}^s f_i(\alpha, \beta) = \mathfrak{S}(Q) J(Q) + O(P^{s-5} (\log P)^{-E}).
\end{equation}
Combining Corollary~\ref{huaBound} and Corollary~\ref{fiBound} yields the minor arc bound
\begin{equation}\label{minorArcBound}
	\int_\mathfrak{m} \prod_{i=1}^s f_i(\alpha, \beta) \ll P^{s-5} (\log P)^{-E},
\end{equation}
and moreover, since $\mathcal{A}\setminus\mathfrak{B} \subseteq \mathfrak{m}$, by Corollary~\ref{fiBound} and Theorem~\ref{huaBound} we have
\begin{equation}\label{inhomogeneousMinorArcBound}
	\int_{\mathcal{A}\setminus\mathfrak{B}} \prod_{i=1}^s f_i(\alpha, \beta) \ll P^{s-5} (\log P)^{-E}.
\end{equation}
Now by (\ref{Rdef}), (\ref{majorArcBreakdown}), and (\ref{inhomogeneousMinorArcBound}) we have
\begin{equation}\label{RBreakdown}
	R(P) = \mathfrak{S}(Q)J(Q) + O(P^{s-5} (\log P)^{-E}).
\end{equation}

\end{proof}

\section{Convergence of the Singular Series}\label{sec:singularSeriesConvergence}

\begin{lemma}
Let $(q_1,q_2)=1$.  Then
\[
	W_i(q_1q_2,a,b) = W_i(q_2,aq_1,bq_1^2)W_i(q_1,aq_2,bq_2^2).
\]
\end{lemma}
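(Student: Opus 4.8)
The plan is to prove this multiplicativity by the Chinese Remainder Theorem, exactly as one handles complete exponential sums in the classical Waring--Goldbach setting. Since $(q_1,q_2)=1$, every residue class $r$ modulo $q_1q_2$ can be written uniquely as $r \equiv q_2 s + q_1 t \pmod{q_1q_2}$ with $s$ running over a complete residue system modulo $q_1$ and $t$ over one modulo $q_2$; this is a bijection because $q_2$ is invertible modulo $q_1$ and $q_1$ is invertible modulo $q_2$. Moreover $(r,q_1q_2)=1$ if and only if $(r,q_1)=1$ and $(r,q_2)=1$, and since $r \equiv q_2 s \pmod{q_1}$ with $(q_2,q_1)=1$ the first condition is equivalent to $(s,q_1)=1$; likewise $(r,q_2)=1$ is equivalent to $(t,q_2)=1$. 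Hence the sum defining $W_i(q_1q_2,a,b)$ may be rewritten as a double sum over such $s$ and $t$, each ranging over the residues coprime to $q_1$ and $q_2$ respectively.

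Next I would expand $r^2 = q_2^2 s^2 + 2q_1q_2 st + q_1^2 t^2$ and $r^3 = q_2^3 s^3 + 3q_1q_2^2 s^2 t + 3q_1^2 q_2 s t^2 + q_1^3 t^3$ and substitute into $\frac{au_ir^2 + bv_ir^3}{q_1q_2}$. Every mixed term is divisible by $q_1q_2$, so it contributes an integer to the argument of $e(\cdot)$ and may be discarded; cancelling one factor of $q_2$ from the pure-$s$ part and one factor of $q_1$ from the pure-$t$ part leaves
\[
	\frac{au_ir^2 + bv_ir^3}{q_1q_2} \equiv \frac{(aq_2)u_i s^2 + (bq_2^2)v_i s^3}{q_1} + \frac{(aq_1)u_i t^2 + (bq_1^2)v_i t^3}{q_2} \pmod 1.
\]
Consequently $e\!\left(\frac{au_ir^2 + bv_ir^3}{q_1q_2}\right)$ factors as a product of an exponential depending only on $s$ (with modulus $q_1$) times one depending only on $t$ (with modulus $q_2$), and the double sum separates.

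Inserting this factorization, the $s$-sum is exactly $W_i(q_1, aq_2, bq_2^2)$ and the $t$-sum is exactly $W_i(q_2, aq_1, bq_1^2)$, giving the claim. The only points needing care are checking that the parametrization $r \equiv q_2 s + q_1 t$ is a bijection compatible with the coprimality condition, and correctly tracking which mixed terms vanish modulo $q_1q_2$; I do not anticipate any genuine obstacle, since this is the standard multiplicativity argument for Gauss-type sums with a prime-restricted summation variable. Equivalently, one can phrase it abstractly: the reduction map $\mathbb{Z}/q_1q_2\mathbb{Z} \cong \mathbb{Z}/q_1\mathbb{Z} \times \mathbb{Z}/q_2\mathbb{Z}$ is a ring isomorphism restricting to a bijection on units, and evaluating the additive character applied to $au_iX^2 + bv_iX^3$ through this isomorphism produces precisely the twisted coefficients $(aq_2, bq_2^2)$ and $(aq_1, bq_1^2)$ above.
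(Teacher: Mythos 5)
Your proof is correct and follows essentially the same Chinese Remainder Theorem parametrization that the paper uses (the paper writes $r = cq_1 + dq_2$, you write $r = q_2 s + q_1 t$; same thing up to labeling). Your intermediate display showing exactly how the twisted coefficients $(aq_2, bq_2^2)$ and $(aq_1, bq_1^2)$ emerge from cancelling a factor of $q_2$ (resp.\ $q_1$) is in fact stated more carefully than the paper's corresponding line, which omits those $q_j$-factors in the exponents before jumping to the final answer.
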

\begin{proof}
Each residue class $r$ modulo $q_1q_2$ with $(r, q_1q_2) = 1$ is uniquely represented as $cq_1+dq_2$ with $1 \leq c \leq q_2$, $(c, q_2) = 1$, $1 \leq d \leq q_1$, and $(d, q_1) = 1$.  Also, $cq_1$, $dq_2$ run over all residue classes modulo $q_2$, $q_1$ with $(cq_1, q_2) = 1$, $(dq_2, q_1) = 1$ respectively.  Thus
\[
	W_i(q_1q_2,a,b) = \sum_{\substack{c=1 \\ (c,q_2)=1}}^{q_2} \sum_{\substack{d=1 \\ (d,q_1)=1}}^{q_1} e\left( \frac{au_i(cq_1+dq_2)^2 + bv_i(cq_1+dq_2)^3}{q_1q_2} \right)
\]
\[
	= \sum_{\substack{c=1 \\ (c,q_2)=1}}^{q_2} \sum_{\substack{d=1 \\ (d,q_1)=1}}^{q_1} e\left( \frac{au_ic^2q_1+bv_ic^3q_2^2}{q_2} \right) e\left( \frac{au_id^2q_2 + bv_id^3q_2^2}{q_1q_2} \right)
\]
\[
	= W_i(q_2,aq_1,bq_1^2)W_i(q_1,aq_2,bq_2^2).
\]
\end{proof}

\begin{lemma}
A(q) is multiplicative.
\end{lemma}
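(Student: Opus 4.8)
The plan is to reduce the multiplicativity of $A(q)$ to the multiplicativity of $\phi(q)^{-s}$ together with the multiplicativity property of the complete exponential sums $W_i$ just established in the preceding lemma. The key bookkeeping point is that the outer double sum over $(a,b)$ with $(a,b,q)=1$ factors compatibly with the CRT decomposition of the modulus.

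Concretely, I would write $q = q_1 q_2$ with $(q_1,q_2)=1$ and set up the correspondence between pairs $(a,b) \bmod q$ with $(a,b,q)=1$ and pairs $((a_1,b_1),(a_2,b_2))$ with $(a_j,b_j,q_j)=1$, via $a \equiv a_1 q_2 + a_2 q_1$, $b \equiv b_1 q_2 + b_2 q_1 \pmod{q}$ — or, matching the normalization in the previous lemma, simply observe that as $(a,b)$ runs over residues mod $q$ coprime (as a triple) to $q$, the reductions $(a q_1, b q_1^2) \bmod q_2$ and $(a q_2, b q_2^2) \bmod q_1$ run independently over the corresponding coprimality-restricted sets mod $q_2$ and mod $q_1$. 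Then, substituting the factorization $W_i(q_1 q_2, a, b) = W_i(q_2, a q_1, b q_1^2)\, W_i(q_1, a q_2, b q_2^2)$ from the previous lemma and $\phi(q_1 q_2)^s = \phi(q_1)^s \phi(q_2)^s$, the sum $A(q_1 q_2)$ separates as a product of a sum over $(a_2,b_2) \bmod q_2$ and a sum over $(a_1,b_1) \bmod q_1$, which are exactly $A(q_2)$ and $A(q_1)$.

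The one subtlety I would be careful about is the coprimality condition on the outer sum: $(a,b,q)=1$ does \emph{not} decompose as $(a,b,q_1)=1$ and $(a,b,q_2)=1$ in a naive way unless one passes through the correct CRT identification, and one must check that the index set genuinely splits as a Cartesian product under the substitution — i.e. that $(a,b,q_1q_2)=1$ is equivalent to the conjunction of the two local coprimality conditions after reduction. This is the main (though still routine) obstacle; it is resolved by noting that a prime $\ell \mid q_1$ divides both $a$ and $b$ iff it divides both $a q_2$ and $b q_2^2$ (since $(\ell, q_2)=1$), and symmetrically for primes dividing $q_2$, so the triple-coprimality mod $q_1 q_2$ is exactly the conjunction of the two triple-coprimality conditions for the reduced pairs.

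Having split the index set and the summand, the computation
\[
	A(q_1 q_2) = \Bigg( \mathop{\sum_{a_1=1}^{q_1} \sum_{b_1=1}^{q_1}}_{(a_1,b_1,q_1)=1} \frac{1}{\phi(q_1)^s} \prod_{i=1}^s W_i(q_1, a_1, b_1) \Bigg) \Bigg( \mathop{\sum_{a_2=1}^{q_2} \sum_{b_2=1}^{q_2}}_{(a_2,b_2,q_2)=1} \frac{1}{\phi(q_2)^s} \prod_{i=1}^s W_i(q_2, a_2, b_2) \Bigg) = A(q_1) A(q_2)
\]
follows immediately, and I would also remark that $A(1) = 1$ so $A$ is multiplicative in the standard sense. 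This then justifies writing $\mathfrak{S}(Q)$, and ultimately the singular series $\mathfrak{S} = \prod_p \big( \sum_{k \geq 0} A(p^k) \big)$, as an Euler product in the later sections.
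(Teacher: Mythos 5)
Your proof is correct and takes essentially the same route as the paper: decompose $(a,b)$ via the Chinese Remainder Theorem as $a \equiv a_1 q_2 + a_2 q_1$, $b \equiv b_1 q_2 + b_2 q_1$, apply the $W_i$ factorization from the preceding lemma, reindex using $(q_1,q_2)=1$, and pull out $\phi(q_1 q_2)^{-s} = \phi(q_1)^{-s}\phi(q_2)^{-s}$. Your explicit verification that $(a,b,q_1 q_2)=1$ is equivalent to the conjunction of the two local triple-coprimality conditions is a useful bit of care that the paper leaves implicit, but it does not change the structure of the argument.
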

\begin{proof}
Let $(q_1,q_2)=1$.  Then
\[
	A(q_1q_2) = \mathop{\sum_{a=1}^{q_1q_2}\sum_{b=1}^{q_1q_2}}_{(a,b,q_1q_2)=1} \frac{1}{\phi(q_1q_2)^s} \prod_{i=1}^s W_i(q_1q_2,a,b).
\]
Now $a$ and $b$ can be represented by $a_1q_2+a_2q_1$ and $b_1q_2+b_2q_1$ respectively, with $1 \leq a_1,b_1 \leq q_1$, $1 \leq a_2,b_2 \leq q_2$.  So we can rewrite our sum as
\[
	A(q_1q_2) = \mathop{\sum_{a_1=1}^{q_1}\sum_{b_1=1}^{q_1}}_{(a_1,b_1,q_1)=1} \mathop{\sum_{a_2=1}^{q_2}\sum_{b_2=1}^{q_2}}_{(a_2,b_2,q_2)=1} \frac{1}{\phi(q_1q_2)^s} \prod_{i=1}^s W_i(q_2,a_2q_1^2,b_2q_1^3) W_i(q_1,a_1q_2^2,b_1q_2^3).
\]
Now since $(q_1,q_2)=1$, $(a_2,b_2,q_1)=1$, and $(a_1,b_1,q_2)=1$, we have that $a_2q_1^2,b_2q_1^3,a_1q_2^2,b_1q_2^3$ run through complete sets of residue classes modulo $q_2,q_2,q_1,q_1$ respectively.  Thus
\[
	A(q_1q_2) = \mathop{\sum_{a_1=1}^{q_2}\sum_{b_1=1}^{q_2}}_{(a_1,b_1,q_2)=1} \mathop{\sum_{a_2=1}^{q_1}\sum_{b_2=1}^{q_1}}_{(a_2,b_2,q_1)=1} \frac{1}{\phi(q_1q_2)^s} \prod_{i=1}^s W_i(q_2,a_2,b_2) W_i(q_1,a_1,b_1)
\]
\[
	= A(q_1)A(q_2).
\]
\end{proof}

Let $\mathfrak{S}$ be the completed singular series
\[
	\mathfrak{S} = \sum_{q=1}^\infty A(q).
\]

Since $A(q)$ is multiplicative,
\begin{equation}\label{Sproduct}
	\mathfrak{S} = \prod_p \left(1 + \sum_{k=1}^\infty A(p^k)\right).
\end{equation}

\begin{lemma}\label{frakSConverges} $\mathfrak{S}$ converges absolutely.
\end{lemma}
\begin{proof}

\[
	A(p^k) = \mathop{\sum_{a=1}^{p^k}\sum_{b=1}^{p^k}}_{(a,b,p^k)=1} \frac{1}{\phi(p^k)^s} \prod_{i=1}^s W_i(p^k,a,b).
\]
By Lemma~\ref{WqaBound} and the fact that there are $\ll p^{2k}$ choices for the pair $a$, $b$, we have
\[
	A(p^k) \ll p^{2k} \phi(p^k)^{-s} ((p^k)^{\frac{1}{2}+\varepsilon})^s 
\]
\[
	\ll (p^k)^{2-\frac{1}{2}s+\varepsilon}.
\]
Since $s \geq 7$, we have
\begin{equation}\label{Abound}
	A(p^k) \ll (p^k)^{-\frac{3}{2}+\varepsilon}.
\end{equation}
Thus
\[
	\sum_{k=1}^\infty A(p^k) \ll \sum_{k=1}^\infty (p^k)^{-\frac{3}{2}+\varepsilon} = \frac{p^{-3/2+\varepsilon}}{1-p^{-3/2+\varepsilon}} \ll p^{-3/2+\varepsilon}.
\]
Then
\[
	\sum_p \sum_{k=1}^\infty A(p^k) \ll \sum_p p^{-3/2+\varepsilon}
\]
converges, so
\[
	\mathfrak{S} = \prod_p \left(1 + \sum_{k=1}^\infty A(p^k)\right)
\]
converges.

\end{proof}

\section{Positivity of the Singular Series}\label{sec:singularSeriesPositivity}

To show that $R(P)$ is eventually positive, we now need to show that $\mathfrak{S}$ is positive.

\begin{lemma}\label{upperProductBound} There exists $R > 0$ such that
\[
	\frac{1}{2} < \prod_{p \geq R} \left(1 + \sum_{k=1}^\infty A(p^k)\right).
\]
\end{lemma}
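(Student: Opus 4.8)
The plan is to exploit the bound \eqref{Abound} on $A(p^k)$, which already contains all the hard analytic input, and feed it into the convergence of $\sum_p p^{-3/2+\epsilon}$. First I would recall that from \eqref{Abound} we have $A(p^k) \ll (p^k)^{-3/2+\epsilon}$ uniformly in $p$ and $k$, with an absolute implied constant, say $C_0$. Summing the geometric-type series over $k \geq 1$ gives, for every prime $p$,
\[
	\left| \sum_{k=1}^\infty A(p^k) \right| \leq C_0 \sum_{k=1}^\infty p^{k(-3/2+\epsilon)} = \frac{C_0 p^{-3/2+\epsilon}}{1 - p^{-3/2+\epsilon}} \leq C_1 p^{-3/2+\epsilon}
\]
for an absolute constant $C_1$ (using $p \geq 2$ to bound the denominator away from zero). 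Write $\delta_p = \sum_{k=1}^\infty A(p^k)$, so that $|\delta_p| \leq C_1 p^{-3/2+\epsilon}$.

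Next I would use the standard fact that an infinite product $\prod_{p \geq R}(1+\delta_p)$ with $\sum_p |\delta_p| < \infty$ converges to a nonzero limit, and that the limit tends to $1$ as $R \to \infty$. Concretely, I would take logarithms: for $R$ large enough that $|\delta_p| \leq 1/2$ for all $p \geq R$ (which holds once $C_1 p^{-3/2+\epsilon} \leq 1/2$, i.e. for all but finitely many $p$), we have $|\log(1+\delta_p)| \leq 2|\delta_p|$, so
\[
	\left| \sum_{p \geq R} \log(1+\delta_p) \right| \leq 2 \sum_{p \geq R} |\delta_p| \leq 2C_1 \sum_{p \geq R} p^{-3/2+\epsilon}.
\]
Since $\sum_p p^{-3/2+\epsilon}$ converges (for $\epsilon < 1/2$), its tail can be made smaller than $(\log 2)/2$ by choosing $R$ sufficiently large. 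For such $R$ the sum $\sum_{p \geq R}\log(1+\delta_p)$ has absolute value less than $(\log 2)/2 < \log 2$... but more to the point, it exceeds $-\log 2$, hence $\prod_{p \geq R}(1+\delta_p) = \exp\left(\sum_{p \geq R}\log(1+\delta_p)\right) > e^{-\log 2} = \tfrac12$, which is the claimed inequality. One should also note the product converges (the partial sums of logs form a Cauchy sequence by the same tail bound), so the infinite product is a well-defined positive real number.

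There is essentially no obstacle here: the lemma is a soft consequence of the already-established absolute convergence of the singular series, and the only mild care needed is to make the implied constants genuinely absolute (uniform in $p$) so that a single threshold $R$ works simultaneously for all tails. The reason this lemma is isolated is presumably that the \emph{local} (finite) part of the product, $\prod_{p < R}(1 + \sum_k A(p^k))$, requires the local solvability hypothesis to be shown positive — that is handled separately — whereas the tail is unconditional, and this lemma records exactly that unconditional fact.
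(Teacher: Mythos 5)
Your argument is essentially the same as the paper's: both feed the bound~(\ref{Abound}) into $\delta_p := \sum_{k\geq 1} A(p^k) \ll p^{-3/2+\epsilon}$ and conclude by shrinking the tail of the convergent series $\sum_p |\delta_p|$ so that the tail product is close to $1$ (the paper uses the elementary inequality $\prod(1-a_p) \geq 1-\sum a_p$ together with an integral comparison, while you take logarithms). There is one small constant slip worth fixing: requiring $\sum_{p\geq R} p^{-3/2+\epsilon} < (\log 2)/2$ gives, by your own chain of estimates, only $\bigl|\sum_{p\geq R}\log(1+\delta_p)\bigr| \leq C_1\log 2$ rather than $(\log 2)/2$; you should instead force the tail below $(\log 2)/(2C_1)$, which is a trivial repair since the tail can be made arbitrarily small.
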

\begin{proof}
By~(\ref{Abound}), we have $A(p^k) \ll (p^k)^{-3/2+\varepsilon} \ll (p^k)^{-1/4}$.  Choose $C, R$ such that $A(p^k) \leq Cp^{-5/4} < Cp^{-1/4} < \frac{1}{8}$ for all $p \geq R-1$.  Then
\[
	\prod_{p\geq R}\left(1-Cp^{-5/4}\right) \geq 1 - \sum_{p\geq R} Cp^{-5/4}
\]
\[
	\geq 1 - C\int_{R-1}^\infty x^{-5/4} dx = 1 - 4C(R-1)^{-1/4} \geq \frac{1}{2}.
\]

\end{proof}

We now need only show that for $p \leq R$, $1 + \sum_{k=1}^\infty A(p^k) > 0$.  Define $M(q)$ to be the number of solutions $(x_1, \ldots, x_s)$ to the simultaneous equations
\begin{equation}\nonumber
	\begin{split}
		& \sum_{i=1}^s u_ix_i^2 \equiv 0 \pmod q \\
		& \sum_{i=1}^s v_ix_i^3 \equiv 0 \pmod q
	\end{split}
\end{equation}
with $(x_i,q)=1$ for all $i$.

\begin{lemma}\label{AMrelation} For any positive integer $q$,
\[
	M(q) = \frac{\phi(q)^s}{q^2} \sum_{d|q} A(d).
\]
\end{lemma}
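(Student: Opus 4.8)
The plan is to express $M(q)$ via the standard orthogonality relation for the additive characters modulo $q$ and recognize the resulting exponential sums as (products of) the $W_i$. First I would write
\[
M(q) = \sum_{\substack{x_1,\ldots,x_s \bmod q \\ (x_i,q)=1}} \left( \frac{1}{q}\sum_{a=1}^{q} e\!\left(\frac{a \sum_i u_i x_i^2}{q}\right)\right) \left( \frac{1}{q}\sum_{b=1}^{q} e\!\left(\frac{b \sum_i v_i x_i^3}{q}\right)\right),
\]
using that the inner averages are the indicator functions of the two congruence conditions. Interchanging the order of summation, the $x_i$-sum factors across $i$, and for each $i$ one gets $\sum_{(r,q)=1} e((a u_i r^2 + b v_i r^3)/q) = W_i(q,a,b)$. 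Hence
\[
M(q) = \frac{1}{q^2}\sum_{a=1}^{q}\sum_{b=1}^{q} \prod_{i=1}^{s} W_i(q,a,b).
\]

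The next step is to split the $(a,b)$ sum according to $d = q/\gcd(a,b,q)$, i.e. group together all pairs $(a,b)$ whose common divisor with $q$ is $q/d$. Writing $a = (q/d)a'$, $b = (q/d)b'$ with $(a',b',d)=1$ and $1 \le a',b' \le d$, a short computation shows $W_i(q, (q/d)a', (q/d)b')$ depends only on $a',b',d$ in the expected way — in fact it equals $\frac{\phi(q)}{\phi(d)} W_i(d,a',b')$, by writing $r$ modulo $q$ as $r' + d t$ and summing the free variable $t$, since $u_i r^2 + v_i r^3$ modulo $d$ depends only on $r \bmod d$ when scaled by $q/d$. (More carefully: each residue class $r' \bmod d$ with $(r',d)=1$ lifts to exactly $\phi(q)/\phi(d)$ classes $r \bmod q$ with $(r,q)=1$, and the phase $(a u_i r^2 + b v_i r^3)/q = (a' u_i r^2 + b' v_i r^3)/d$ depends only on $r \bmod d$.) Therefore
\[
M(q) = \frac{1}{q^2}\sum_{d \mid q}\; \mathop{\sum_{a'=1}^{d}\sum_{b'=1}^{d}}_{(a',b',d)=1} \prod_{i=1}^{s}\frac{\phi(q)}{\phi(d)} W_i(d,a',b') = \frac{\phi(q)^s}{q^2}\sum_{d\mid q}\; \mathop{\sum_{a'=1}^{d}\sum_{b'=1}^{d}}_{(a',b',d)=1} \frac{1}{\phi(d)^s}\prod_{i=1}^{s} W_i(d,a',b'),
\]
and the inner double sum over $(a',b')$ is precisely $A(d)$ by its definition in Section~\ref{sec:majorArcs}. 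This gives the claimed identity $M(q) = \frac{\phi(q)^s}{q^2}\sum_{d\mid q} A(d)$.

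I expect the only genuinely delicate point to be the reduction $W_i(q, (q/d)a', (q/d)b') = \frac{\phi(q)}{\phi(d)}W_i(d,a',b')$, specifically verifying that the sum over the "free" part $t$ of $r = r' + dt$ (ranging over lifts of $r'$ coprime to $q$) contributes exactly the factor $\phi(q)/\phi(d)$ and not some character sum that fails to vanish; this is clean when $\gcd(q/d, d)$ behaves well but requires a little care at prime powers $p^k \| q$ where $d$ absorbs only part of the $p$-power. One can either handle it directly by the Chinese Remainder Theorem — reducing to prime-power $q$ and using that $W_i$ is "multiplicative" in the sense already established — or cite the analogous computation in Hua's treatment. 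With that lemma in hand, the rest is bookkeeping: orthogonality, Fubini, and matching the definition of $A(d)$.
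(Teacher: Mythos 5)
Your proof is correct and follows essentially the same route as the paper: orthogonality to write $M(q) = q^{-2}\sum_{a,b}\prod_i W_i(q,a,b)$, then regrouping $(a,b)$ according to $d = q/(a,b,q)$ and extracting the lifting factor $\phi(q)/\phi(d)$ to recover $A(d)$. The point you flag as potentially delicate is in fact clean: for $d \mid q$ the reduction map $(\mathbb{Z}/q\mathbb{Z})^{\times} \to (\mathbb{Z}/d\mathbb{Z})^{\times}$ is a surjective group homomorphism, so every fiber has exactly $\phi(q)/\phi(d)$ elements and no extraneous character sum arises.
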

\begin{proof}
\[
	M(q) = \frac{1}{q^2} \sum_{r_1=1}^q \sum_{r_2=1}^q \sum_{\substack{x_1=1 \\ (x_1,q)=1}}^q \cdots \sum_{\substack{x_s=1 \\ (x_s,q)=1}}^q e\left(\frac{r_1(u_1x_1^2 + \ldots + u_sx_s^2) + r_2(v_1x_1^3 + \ldots + v_sx_s^3)}{q}\right)
\]
\[
	= \frac{1}{q^2} \sum_{r_1=1}^q \sum_{r_2=1}^q \prod_{i=1}^s \sum_{\substack{x_i=1 \\ (x_1,q)=1}}^q e\left(\frac{r_1u_ix_i^2 + r_2v_ix_i^3}{q}\right).
\]
Let $d = \frac{q}{(r_1,r_2,q)}$, $a_1 = \frac{r_1}{(r_1,r_2,q)}$, and $a_2 = \frac{r_2}{(r_1,r_2,q)}$.  Then, rearranging according to the value of $d$, we have
\[
	M(q) = \frac{1}{q^2} \sum_{d|q} \mathop{\sum_{a_1=1}^d \sum_{a_2=1}^d}_{(a_1,a_2,d)=1} \prod_{i=1}^s \frac{\phi(q)}{\phi(d)} \sum_{\substack{x_i=1 \\ (x_i,d)=1}}^d e\left(\frac{a_1u_ix_i^2 + a_2v_ix_i^3}{d}\right)
\]
\[
	= \frac{\phi(q)^s}{q^2} \sum_{d|q} A(d).
\]

\end{proof}

\begin{lemma}\label{Mlift} For positive integers $t, \gamma$ with $t > \gamma$,
\[
	M(p^t) \geq M(p^\gamma)p^{(t-\gamma)(s-2)}.
\]
\end{lemma}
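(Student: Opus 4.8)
The natural approach is a Hensel-style lifting argument carried out one power of $p$ at a time. First I would reduce the claim to the single-step inequality $M(p^{k+1}) \geq p^{s-2} M(p^k)$ for every $k \geq 1$, since iterating this from level $\gamma$ up to level $t$ immediately gives $M(p^t) \geq M(p^\gamma) p^{(t-\gamma)(s-2)}$.

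To prove the single step, fix a tuple $\mathbf{x} = (x_1,\dots,x_s)$ counted by $M(p^k)$, so each $x_i$ is a unit mod $p$ and $\sum_i u_i x_i^2 = p^k m_1$, $\sum_i v_i x_i^3 = p^k m_2$ for integers $m_1, m_2$. Every residue class $\mathbf{x}' \pmod{p^{k+1}}$ reducing to $\mathbf{x}$ has the shape $x_i' = x_i + p^k y_i$ with $\mathbf{y} \in (\mathbb{Z}/p\mathbb{Z})^s$, and automatically has unit coordinates. Expanding the two congruences and using $2k \geq k+1$, every term of degree $\geq 2$ in $\mathbf{y}$ is divisible by $p^{2k}$ and vanishes modulo $p^{k+1}$; dividing through by $p^k$ shows that $\mathbf{x}'$ is counted by $M(p^{k+1})$ exactly when $\mathbf{y}$ solves the affine linear system over $\mathbb{F}_p$
\[
	2\sum_{i=1}^{s} u_i x_i\, y_i \equiv -m_1, \qquad 3\sum_{i=1}^{s} v_i x_i^{2}\, y_i \equiv -m_2 \pmod{p}.
\]
Its coefficient matrix is the Jacobian of the system at $\mathbf{x}$, of rank $r \leq 2$; if the system is consistent it has precisely $p^{s-r} \geq p^{s-2}$ solutions $\mathbf{y}$, and summing over the $M(p^k)$ admissible $\mathbf{x}$ gives the desired bound.

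The heart of the matter, and the step I expect to be the main obstacle, is therefore proving that this $2 \times s$ linear system is consistent for every admissible $\mathbf{x}$. Consistency is automatic when the Jacobian has rank $2$; the rank-$1$ and rank-$0$ cases need care, the rank-$0$ case (using that the $x_i$ are units) arising exactly when $p$ divides all of $u_1,\dots,u_s,v_1,\dots,v_s$. For odd $p$ I would dispose of the degenerate cases by a secondary induction on $\min_i v_p(u_i) + \min_i v_p(v_i)$: dividing each equation by the largest power of $p$ common to its coefficients expresses $M(p^k)$ for the original system in terms of the corresponding count for a system with strictly smaller coefficient valuations and a correspondingly lowered modulus, to which the inductive hypothesis applies, and the exponent bookkeeping is arranged so as to recover the stated inequality. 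The prime $p = 2$ (and, less severely, $p = 3$) requires a separate argument, since there the relevant factors $2$ and $3$ in the Jacobian are not invertible and the quadratic part of the lift is independent of $\mathbf{y}$; for such small moduli one can instead appeal to direct verification, of the kind the paper already uses for the local conditions.
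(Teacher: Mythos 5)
The paper does not give a self-contained argument here: it proves this lemma by citing Lemma~6.7 of Wooley's \emph{On simultaneous additive equations, II} and observing that Wooley's lifting argument respects the restriction to reduced residue classes. Your proposal is therefore a genuinely different route, attempting to prove the inequality from scratch by single-step Hensel lifting.

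The framework you set up (reduce to $M(p^{k+1})\geq p^{s-2}M(p^k)$, expand $x_i'=x_i+p^ky_i$, and count $\mathbf{y}\in\mathbb{F}_p^s$ solving the Jacobian system) is the right shape, and your identification of consistency in the degenerate cases as ``the heart of the matter'' is accurate. But the proposed resolution does not close the gap. Your secondary induction on $\min_i v_p(u_i)+\min_i v_p(v_i)$ only bites when $p$ divides every coefficient of one of the forms, i.e.\ when a whole Jacobian row vanishes identically. It does nothing for the other rank-one situation, in which all $u_i,v_i$ are units mod $p$ but the two rows $(2u_ix_i)_i$ and $(3v_ix_i^2)_i$ happen to be proportional at a particular admissible $\mathbf{x}$. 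In that case consistency genuinely can fail, so that $\mathbf{x}$ has \emph{zero} lifts to modulus $p^{k+1}$; for instance with $p=5$, $u=(1,1,3)$, $v=(1,4,3)$, the point $\mathbf{x}=(1,4,1)$ solves the system mod~$5$ with unit coordinates, the two Jacobian rows $(2,3,1)$ and $(3,2,4)$ satisfy $3\cdot\text{Row}_1\equiv 2\cdot\text{Row}_2$, and the resulting linear system for $\mathbf{y}$ is inconsistent, so $\mathbf{x}$ does not lift to mod~$25$. Consequently the per-solution bound ``number of lifts of $\mathbf{x}$ is $\geq p^{s-2}$'' is simply false, and the aggregate inequality $M(p^{k+1})\geq p^{s-2}M(p^k)$ would have to be argued by a compensation/averaging analysis (showing that the gains from rank-$\leq 1$ consistent points, which give $p^{s-1}$ or $p^s$ lifts, outweigh the losses from inconsistent ones) or, more in the spirit of Wooley, by restricting attention to solutions whose Jacobian has full rank mod~$p$ and lifting only those. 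Neither step is present in your sketch, so as written the argument does not establish the lemma.
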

\begin{proof}

This is \cite{wooleySAEii}, Lemma 6.7, with the added observation that 
\[
	\max\{|b_1-a_1|_p, |b_2-a_2|_p\} \leq p^{-\gamma} \Rightarrow p^{\gamma} | (b_1-a_1), (b_2-a_2).
\]   
So if $a_1,b_1 \not\equiv 0 $ (mod $p$), then $a_2,b_2 \not\equiv 0 $ (mod $p$).  Thus the argument lifts solutions over reduced residue classes modulo $p^\gamma$ to solutions over reduced residue classes modulo $p^t$, so it applies here without modification.
\end{proof}

\begin{theorem}\label{Spositivity}	If for every prime $p$ there exists a positive integer $\gamma$ such that $M(p^\gamma) > 0$, then $\mathfrak{S} > 0$.
\end{theorem}
\begin{proof}
By Lemma~\ref{AMrelation},
\[
	1 + \sum_{k=1}^\infty A(p^k) = \lim_{t\rightarrow\infty}\frac{p^{2t}}{\phi(p^t)^s}M(p^t)
\]
\[
	\geq \lim_{t\rightarrow\infty}p^{(2-s)t}M(p^t).
\]
By Lemma~\ref{Mlift}, for some positive integer $\gamma$,
\[
	1 + \sum_{k=1}^\infty A(p^k) \geq \lim_{t\rightarrow\infty}p^{(2-s)t}M(p^\gamma)p^{(t-\gamma)(s-2)}
\]
\begin{equation}\label{lowerProductBound}
	\geq \lim_{t\rightarrow\infty} p^{(-\gamma)(s-2)} > 0.
\end{equation}
The lemma now follows from (\ref{Sproduct}), Lemma \ref{upperProductBound}, and (\ref{lowerProductBound}).
\end{proof}

In Sections~\ref{sec:localConditions} and~\ref{sec:computationalTechniques} we prove that, under the conditions of Theorem~\ref{mainThmv2}, for every $p$ there exists a positive integer $\gamma$ such that $M(p^\gamma)>0$.

\section{Solvability of the Local Problem}\label{sec:localConditions}

We now consider the local system
\begin{equation}
	\begin{split}
		u_1x_1^2 + \ldots + u_sx_s^2 = 0 \pmod p\\
		v_1x_1^3 + \ldots + v_sx_s^3 = 0 \pmod p
	\end{split}
\end{equation}
with $x_i \neq 0$ in $\mathbb{Z}/p\mathbb{Z}$.

We will prove the following result:
\begin{theorem}\label{localSolvability} The system
\begin{equation}
	\begin{split}
		u_1x_1^2 + \ldots + u_sx_s^2 = U \pmod p\\
		v_1x_1^3 + \ldots + v_sx_s^3 = V \pmod p
	\end{split}
\end{equation}
has a solution $(x_1, \ldots, x_s)$ with all $x_i \neq 0$ modulo every prime $p$ if
\begin{enumerate}

\item $\displaystyle\sum_{i=1}^s u_i = U \pmod 2$ and $\displaystyle\sum_{i=1}^s v_i = V \pmod 2,$

\item $\displaystyle\sum_{i=1}^s u_i = U \pmod 3,$ and

\item for each prime $p$ at least 7 of the $u_i$, $v_i$ are not zero modulo $p$.

\end{enumerate}
\end{theorem}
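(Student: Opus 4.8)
The plan is to reduce Theorem~\ref{localSolvability} to a finite computation by a two‑stage argument: first handle the two small primes $p=2,3$ by hand using hypotheses (1) and (2), and then handle all primes $p \geq 5$ by a counting argument whose only non‑elementary input is a finite check. Throughout I will freely fix all variables $x_i$ with index $i > 7$ (say) to the value $1$, absorbing their contribution into new constants $U' = U - \sum_{i>7} u_i$ and $V' = V - \sum_{i>7} v_i$; hypothesis (3) guarantees that among $x_1,\dots,x_7$ at least seven of the $u_i$ and $v_i$ — that is, all of them — can be taken nonzero mod $p$ after relabeling, so it suffices to solve a system in exactly the variables with $u_i,v_i \not\equiv 0$.

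For $p=2$: the only admissible value is $x_i \equiv 1$, so the left sides reduce to $\sum u_i$ and $\sum v_i$ modulo $2$, and hypothesis (1) is exactly the statement that this equals $(U,V)$. For $p=3$: the admissible values are $x_i \in \{1,2\}$, and since $x_i^2 \equiv 1$ for all such $x_i$, the first equation again collapses to $\sum u_i \equiv U \pmod 3$, which is hypothesis (2); the second equation, with $x_i^3 \equiv x_i \pmod 3$, becomes $\sum v_i x_i \equiv V \pmod 3$, a single linear equation in the $x_i \in \{1,2\}$ with at least seven nonzero coefficients $v_i$, which is trivially solvable (flipping one variable from $1$ to $2$ changes the sum by $v_i$, and the values $v_i$ and $-v_i$ together with $0$ cover all residues mod $3$).

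For $p \geq 5$ the strategy is the standard one for diagonal systems: show that the number $N$ of solutions $(x_1,\dots,x_7) \in ((\mathbb{Z}/p\mathbb{Z})^\times)^7$ to the pair of equations is close to its "expected" value $(p-1)^7/p^2$, with an error term beaten by a power of $p$, so that $N > 0$ once $p$ exceeds an explicit bound; the remaining finitely many primes below that bound are dispatched by the computer check referenced in the abstract and Sections~\ref{sec:computationalTechniques}. Concretely I would write $N$ via additive characters as in the proof of Lemma~\ref{AMrelation}, namely
\[
	N = \frac{1}{p^2}\sum_{r_1=0}^{p-1}\sum_{r_2=0}^{p-1}\prod_{i=1}^7 S_i(r_1,r_2), \qquad S_i(r_1,r_2) = \sum_{\substack{x=1}}^{p-1} e\!\left(\frac{r_1 u_i x^2 + r_2 v_i x^3}{p}\right),
\]
separate the main term $(r_1,r_2)=(0,0)$ (contributing $(p-1)^7/p^2$), and bound the rest. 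For each $(r_1,r_2) \neq (0,0)$ one has the Weil bound $|S_i(r_1,r_2)| \leq 2\sqrt{p}$ (a cubic or quadratic exponential sum with at least one nonzero coefficient, after possibly completing to include $x=0$ at cost $O(1)$), so the off‑origin contribution is $\ll p^{-2}\cdot p^2 \cdot (2\sqrt p + 1)^7 \ll p^{7/2}$; since $(p-1)^7/p^2 \gg p^5$ this is dominated for all sufficiently large $p$, giving an explicit threshold $p_0$. The main obstacle — and the reason the theorem needs a computer check rather than being fully elementary — is that $p_0$ coming from the crude Weil bound is too large for a by‑hand finishing argument, so one must (a) be careful that the seven "active" variables really can be chosen with \emph{all} of $u_i,v_i$ nonzero mod $p$, which is where hypothesis (3) and the freedom to fix the other $s-7 \geq 6$ variables is used, and (b) organize the residual finite verification efficiently, which is deferred to Section~\ref{sec:computationalTechniques}; I would also need to double‑check the edge cases where $r_1 = 0$ but $r_2 \neq 0$ (or vice versa), where $S_i$ is a pure cubic (resp. quadratic) Gauss‑type sum still bounded by $2\sqrt p$, so they cause no trouble.
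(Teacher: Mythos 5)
Your proposal follows essentially the same route as the paper: dispatch $p=2$ and $p=3$ directly from hypotheses (1) and (2), and for $p\ge 5$ express the number of solutions with $x_i\not\equiv 0$ as an additive-character sum, extract the main term $(p-1)^t/p^2$, bound the off-diagonal terms by a Weil-type estimate (the paper's Lemma~\ref{explicitWBound}, giving $|W_i(p,a,b)|\le 2\sqrt p+1$, from Corollary~2F of Schmidt), and conclude positivity for $p$ beyond an explicit threshold, leaving finitely many primes to a machine check. This is precisely the paper's Lemma~\ref{MpBound} with $t=7$, which yields $M(p)>0$ for $p>40.58$, after which Section~\ref{sec:computationalTechniques} and the Sage code in Appendix~\ref{codeAppendix} verify primes $5\le p<41$. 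Your treatment of $p=3$ via $x^2\equiv 1$ and $x^3\equiv x$ is the same in substance as the paper's argument by signs.

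One point to be careful about: you assert that hypothesis (3) lets you relabel so that \emph{all seven} of $u_1,\dots,u_7$ and $v_1,\dots,v_7$ are nonzero mod $p$, i.e.\ that there are seven common indices $i$ with both $u_i\not\equiv 0$ and $v_i\not\equiv 0$. Hypothesis (3) only gives seven nonzero $u_i$ and seven nonzero $v_i$, not necessarily at the same indices, so this step as you have written it is a strictly stronger claim than what is assumed. The paper instead isolates Lemma~\ref{localTLemma} (the coupled $t$-variable system with all coefficients nonzero) and passes from Theorem~\ref{localSolvability} to it via the ``Observe that'' reduction which permits separate index sets $\{i_1,\dots,i_t\}$ and $\{j_1,\dots,j_t\}$ for the two equations; you would either need to make that same reduction explicit, or argue why the common-index case suffices. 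Aside from this, your bound on the off-origin terms $\ll p^{-2}\cdot p^2\cdot(2\sqrt p+1)^7$ matches the paper's $M(p)\ge \frac{1}{p^2}\bigl((p-1)^7-(p^2-1)(2\sqrt p+1)^7\bigr)$ and leads to the same numerical cutoff, and your reading of the edge cases $r_1=0$ or $r_2=0$ agrees with the paper's hypothesis ``$a$, $b$ not both $p$'' in Lemma~\ref{explicitWBound}.
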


Observe that if the system
\begin{equation}
	\begin{split}
		u_1x_1^2 + \ldots + u_tx_t^2 = U \pmod p\\
		v_1x_1^3 + \ldots + v_tx_t^3 = V \pmod p
	\end{split}
\end{equation}
has a solution for all $u_1, \ldots, u_t, v_1, \ldots, v_t \neq 0$, then so does the system 
\begin{equation}
	\begin{split}
		u_{i_1}x_{i_1}^2 + \ldots + u_{i_t}x_{i_t}^2 = U \pmod p\\
		v_{j_1}x_{j_1}^3 + \ldots + v_{j_t}x_{j_t}^3 = V \pmod p
	\end{split}
\end{equation}
for any $\{i_1, \ldots, i_t\}$, $\{j_1, \ldots, j_t\} \subset \{1, \ldots, s\}$.  Also observe that the conditions of Theorem~\ref{localSolvability} guarantee solvability modulo $p=2$ and $p=3$: $p=2$ is immediate and for $p=3$, the condition guarantees that the quadratic equation is satisfied and each term $v_ix_i^3$ of the cubic equation can be independently set to $1$ or $-1$, allowing us to set $v_1x_1^3 = V$ if $V \not\equiv 0 \pmod 3$ and partition the remainder of $\{1, \ldots, t\}$ into groups of 2 and 3, which can be zeroed by setting them to $\{1, -1\}$ and $\{1, 1, 1\}$.

Thus we have reduced Theorem~\ref{localSolvability} to this lemma:
\begin{lemma}\label{localTLemma} For all $u_i$, $v_i \neq 0 \pmod p$, $p \geq 5$, $t \geq 7$, $U$, $V$, there exist $\{x_1, \ldots, x_s\}$ with $x_i \neq 0 \pmod p$ such that
\begin{equation}
	\begin{split}
		u_1x_1^2 + \ldots + u_tx_t^2 &= U \pmod p\\
		v_1x_1^3 + \ldots + v_tx_t^3 &= V \pmod p.
	\end{split}
\end{equation}
\end{lemma}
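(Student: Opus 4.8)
The plan is to reduce the two-equation system to a one-equation problem over $\mathbb{F}_p^\times$ by a counting (second-moment / character-sum) argument, exploiting the fact that $t \geq 7$ gives a comfortable surplus of variables. First I would fix the cubic equation: since $p \geq 5$, the cubes $v_i x_i^3$ as $x_i$ ranges over $\mathbb{F}_p^\times$ hit either all of $\mathbb{F}_p^\times$ (when $3 \nmid p-1$) or the coset $v_i \cdot (\mathbb{F}_p^\times)^3$ of index $3$ (when $3 \mid p-1$). In either case, a sum of $t \geq 7$ such terms with prescribed nonzero $x_i$ can be made to equal \emph{any} target $V \in \mathbb{F}_p$, and moreover can be made to do so in many ways — this is a standard Cauchy–Davenport / character-sum estimate for sumsets of cosets. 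So I would count the number $N$ of $t$-tuples $(x_1,\dots,x_t) \in (\mathbb{F}_p^\times)^t$ with $\sum v_i x_i^3 = V$ \emph{and} $\sum u_i x_i^2 = U$ simultaneously.

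The main step is to estimate $N$ via orthogonality: write
\[
N = \frac{1}{p^2} \sum_{r,s \in \mathbb{F}_p} \; \prod_{i=1}^t \left( \sum_{x \in \mathbb{F}_p^\times} e_p\!\left( r u_i x^2 + s v_i x^3 \right) \right) e_p(-rU - sV),
\]
where $e_p(\cdot) = e(\cdot/p)$. The term $r = s = 0$ contributes the main term $(p-1)^t/p^2$. For $(r,s) \neq (0,0)$, each inner sum is a complete exponential sum with a polynomial of degree $2$ or $3$ in $x$; by Weil's bound each is $\ll p^{1/2}$ (the polynomial $ru_i x^2 + s v_i x^3$ is nonconstant since $u_i, v_i \not\equiv 0$ and not both $r,s$ vanish, and it is not a perfect power in any relevant sense because its degree is $\leq 3 < p$). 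Hence the off-diagonal contribution is $\ll p^{-2} \cdot p^2 \cdot p^{t/2} = p^{t/2}$, so
\[
N = \frac{(p-1)^t}{p^2} + O\!\left(p^{t/2}\right).
\]
Since $t \geq 7$, we have $(p-1)^t / p^2 \asymp p^{t-2}$, which dominates $p^{t/2}$ once $t - 2 > t/2$, i.e. $t > 4$; this holds. Therefore $N > 0$ for all $p \geq 5$, which is exactly the conclusion of Lemma~\ref{localTLemma}.

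The hard part, and the place where I would be most careful, is making the Weil bound genuinely uniform and checking it is available for \emph{every} pair $(r,s) \neq (0,0)$ and every admissible $(u_i, v_i)$: when $s \neq 0$ the exponent is a cubic in $x$ and Weil gives $\ll 2\sqrt{p}$ cleanly; when $s = 0$, $r \neq 0$ the exponent $r u_i x^2$ is quadratic and the Gauss-sum bound $\sqrt{p}$ applies, but one must separately handle the restriction to $x \in \mathbb{F}_p^\times$ rather than all of $\mathbb{F}_p$ (subtract the $x=0$ term, contributing at most $1$ per factor). A mild nuisance is bookkeeping the implied constants across the $t$ factors so that the total $O$-term is $\ll_t p^{t/2}$ with a constant independent of $p$; since $t$ is a fixed integer ($t \geq 7$, and in the application $t \leq s$ is bounded by context — or one simply notes the bound for $t = 7$ suffices after discarding extra variables by setting $x_i$ to a fixed value and absorbing into $U, V$), this is harmless. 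If one prefers to avoid Weil entirely, an alternative is a purely elementary two-step argument: use $5$ of the variables to solve the quadratic equation $\sum_{i=1}^5 u_i x_i^2 = U'$ for a suitable $U'$ (five squares over $\mathbb{F}_p^\times$ represent everything, by Cauchy–Davenport on the set of nonzero squares, which has size $(p-1)/2 \geq 2$), then use the remaining $\geq 2$ variables together with the freedom in choosing $U'$ to solve the cubic — but the character-sum route is cleaner and gives the uniform positivity in one stroke, so that is the approach I would write up.
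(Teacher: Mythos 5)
Your approach---orthogonality over $\mathbb{F}_p$, then the Weil bound on the off-diagonal complete exponential sums---is exactly the paper's argument (the paper's Lemma~\ref{explicitWBound} gives $|W_i(p,a,b)| \leq 2\sqrt{p}+1$ via Schmidt's Corollary~2F, and Lemma~\ref{MpBound} is the resulting lower bound on the solution count). However, there is a genuine gap in your conclusion. You pass from
\[
N = \frac{(p-1)^t}{p^2} + O\!\left(p^{t/2}\right)
\]
to ``therefore $N>0$ for all $p \geq 5$,'' but the big-$O$ notation is hiding an explicit constant that matters. Writing out the Weil bound concretely, the error term is bounded by $\frac{1}{p^2}(p^2-1)(2\sqrt{p}+1)^t$, and positivity of $N$ requires
\[
\left(\frac{p-1}{2\sqrt{p}+1}\right)^{t} > p^2-1.
\]
With $t=7$ (the value you ultimately must use, since the hypothesis only guarantees seven nonzero pairs of coefficients), this inequality fails badly for small $p$: at $p=5$ the left side is about $0.1$ while the right side is $24$, and the crossover does not occur until roughly $p > 40.58$. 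So your argument, as written, proves the lemma only for $p \geq 41$; it says nothing for $p \in \{5,7,11,13,17,19,23,29,31,37\}$. The paper handles exactly these residual primes by a finite computer verification (Section~\ref{sec:computationalTechniques} and Appendix~\ref{codeAppendix}), after using the same orthogonality-plus-Weil bound to reduce to that finite check. Your closing remark that the character-sum route ``gives the uniform positivity in one stroke'' is therefore not correct; some supplementary argument or computation for the small primes is unavoidable with this method, and your sketched elementary alternative (five variables for the quadratic, two for the cubic) is not developed enough to close the gap either.
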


\begin{lemma}\label{explicitWBound} For $p > 3$, $a$, $b$ not both $p$, $|W_i(p,a,b)| \leq 2\sqrt{p}+1.$
\end{lemma}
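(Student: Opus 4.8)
The plan is to recognize $W_i(p,a,b) = \sum_{r=1}^{p-1} e\!\left(\frac{au_i r^2 + bv_i r^3}{q}\right)$ (with $q = p$) as a complete exponential sum over $\mathbb{F}_p$ attached to the polynomial $\psi(r) = au_i r^2 + bv_i r^3 \in \mathbb{F}_p[r]$, minus the $r \equiv 0$ term which contributes $1$. That is, write
\[
	W_i(p,a,b) = \left(\sum_{r=0}^{p-1} e\!\left(\frac{au_i r^2 + bv_i r^3}{p}\right)\right) - 1 =: S(\psi) - 1 .
\]
The classical Weil bound for one-variable exponential sums states that if $\psi$ has degree $n$ with $p \nmid n$ (equivalently $\deg\psi$ is not divisible by $p$), then $|S(\psi)| \le (n-1)\sqrt{p}$. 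Here we have two cases: if $bv_i \not\equiv 0 \pmod p$ then $\deg\psi = 3$ and, since $p > 3$, we get $p \nmid 3$, so $|S(\psi)| \le 2\sqrt{p}$ and hence $|W_i(p,a,b)| \le 2\sqrt{p} + 1$. If instead $bv_i \equiv 0 \pmod p$, then since $(a,b,p)=1$ forces $a \not\equiv 0$ (as $v_i \not\equiv 0$ and $b \equiv 0$ would otherwise make $a,b$ both divisible by $p$... actually more carefully: $b \equiv 0$ and $au_i \not\equiv 0$), $\psi$ has degree $2$ and the sum is a Gauss-type sum of modulus exactly $\sqrt{p}$, so $|W_i(p,a,b)| \le \sqrt{p} + 1 \le 2\sqrt{p}+1$. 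The hypothesis ``$a,b$ not both $p$'' is exactly the condition $(a,b,p) \ne p$, i.e. at least one of $a,b$ is a unit mod $p$, which combined with $u_i, v_i \not\equiv 0$ ensures $\psi$ is a genuinely nonconstant polynomial of degree $2$ or $3$.

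The key steps, in order, are: (1) rewrite $W_i(p,a,b)$ as a complete sum over all residues minus the single term $r=0$; (2) identify the polynomial $\psi(r) = au_i r^2 + bv_i r^3$ and determine its degree using $(a,b,p)=1$ together with $u_i v_i \not\equiv 0 \pmod p$; (3) invoke the Weil bound $|S(\psi)| \le (\deg\psi - 1)\sqrt p$, valid since $\deg\psi \in \{2,3\}$ and $p > 3$ guarantees $p \nmid \deg\psi$; (4) add back the $\pm 1$ from the omitted term and take the worst case $\deg\psi = 3$ to conclude $|W_i(p,a,b)| \le 2\sqrt p + 1$.

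The only subtlety — and the main thing to get right — is the degree bookkeeping: one must check that $\psi$ cannot degenerate to a constant (or, in a borderline reading, cannot have both leading coefficients vanish), and this is where the coprimality hypothesis $(a,b,p) = 1$ and the standing assumption $u_i, v_i \not\equiv 0 \pmod p$ are used. There is no serious analytic obstacle here; the estimate is an essentially immediate consequence of the Weil bound, and indeed the result is stated in the excerpt as an adaptation of the classical Hua-type bound (Lemma~\ref{lemma:Wbound}) made explicit in the constant. One should also remark that the cubic case is the one dictating the constant $2\sqrt p + 1$, matching the shape $q^{1/2+\epsilon}$ of Lemma~\ref{lemma:Wbound} with an explicit implied constant.
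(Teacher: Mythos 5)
Your proposal is correct and follows essentially the same route as the paper: complete the sum by adding the $r=0$ term, apply a Weil-type bound to the resulting complete sum over $\mathbb{F}_p$ (the paper cites Corollary~2F of Schmidt~\cite{schmidt} directly for the bound $2\sqrt{p}$, whereas you reprove the needed degree bookkeeping by hand), and then add back the omitted term to obtain $2\sqrt{p}+1$. The extra case analysis you carry out on $\deg\psi\in\{2,3\}$ is sound but is subsumed by the cited reference, so the paper's proof is shorter but not substantively different.
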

\begin{proof}

Corollary 2F of~\cite{schmidt} gives
\[
	\left| \sum_{\substack{r=0}}^{p-1} e(\frac{au_ir^2+bv_ir^3}{p}) \right| \leq 2p^{1/2}.
\]
Now
\[
	|W_i(p,a,b)| = \left| \sum_{\substack{r=1}}^{p-1} e(\frac{au_ir^2+bv_ir^3}{p}) \right|
\]
\[
	\leq \left| \sum_{\substack{r=0}}^{p-1} e(\frac{au_ir^2+bv_ir^3}{p}) \right| + 1 \leq 2\sqrt{p} + 1.
\]
\end{proof}

Let $M_t(q)$ be the number of solutions of the system
\begin{equation}\nonumber
	\begin{split}
		 u_1x_1^2 + \ldots + u_tx_t^2 &\equiv 0 \pmod q \\
		 v_1x_1^3 + \ldots + v_tx_t^3 &\equiv 0 \pmod q.
	\end{split}
\end{equation}

\begin{lemma}\label{MpBound} $M_t(p) \geq \frac{1}{p^2}\big((p-1)^t- (p^2-1)(2\sqrt{p}+1)^t\big)$.
\end{lemma}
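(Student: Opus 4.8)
The plan is to count the solutions of the system modulo $p$ using the standard exponential-sum (circle method modulo $p$) identity and then apply the Weil-type bound from Lemma~\ref{explicitWBound} to show the main term dominates the error. Writing $e_p(\cdot)$ for $e(\cdot/p)$, I would start from
\[
	M_t(p) = \frac{1}{p^2}\sum_{a=1}^{p}\sum_{b=1}^{p} \prod_{i=1}^t \sum_{x_i=1}^{p} e_p(au_ix_i^2 + bv_ix_i^3),
\]
which holds because the inner sum over $a,b$ detects exactly the pairs $(x_1,\dots,x_t)$ satisfying both congruences. The term $a=b=p$ (i.e.\ the trivial character pair $(0,0)$) contributes $\frac{1}{p^2}\cdot p^t = p^{t-2}$, which is the main term; note this counts all residues including those with some $x_i\equiv 0$, but since we want a lower bound on $M_t(p)$ (which includes solutions with zero coordinates) this is harmless — actually, for the intended application we will need a lower bound for the count restricted to nonzero $x_i$, so I would instead work with the restricted sum $\sum_{x_i=1}^{p-1}$, whose trivial term is $\frac{1}{p^2}(p-1)^t$, matching the stated inequality.

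Next I would bound the remaining $p^2-1$ pairs $(a,b)\neq(p,p)$. For each such pair and each $i$, the inner sum is exactly $W_i(p,a,b) = \sum_{x=1}^{p-1} e_p(au_ix^2+bv_ix^3)$, so Lemma~\ref{explicitWBound} gives $|W_i(p,a,b)| \leq 2\sqrt{p}+1$ (using $p>3$ and the hypothesis that $a,b$ are not both $\equiv 0 \bmod p$, which is exactly the condition "not both $p$"). Hence the total contribution of the nontrivial pairs is at most $\frac{1}{p^2}(p^2-1)(2\sqrt{p}+1)^t$ in absolute value. Combining the two estimates,
\[
	M_t(p) \geq \frac{1}{p^2}(p-1)^t - \frac{1}{p^2}(p^2-1)(2\sqrt{p}+1)^t = \frac{1}{p^2}\bigl((p-1)^t - (p^2-1)(2\sqrt{p}+1)^t\bigr),
\]
which is the claimed bound.

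The only genuinely delicate point is making sure the exponential-sum identity is set up with the \emph{restricted} ranges $x_i \in \{1,\dots,p-1\}$ so that the nontrivial inner sums are literally the $W_i(p,a,b)$ of the paper (rather than the full sums over $\{0,\dots,p-1\}$), and confirming that Lemma~\ref{explicitWBound} applies to every pair $(a,b)$ with $(a,b)\not\equiv(0,0)$. Since $M_t(p)$ as defined in the lemma counts \emph{all} solutions and $M_t(p)\ge$ (number of solutions with all $x_i\not\equiv0)$, working with the restricted sum gives the stronger statement, so no issue arises. Everything else is a direct substitution; there is no real obstacle, and the subsequent use of this bound — showing the right-hand side is positive for $t\ge 7$ and $p\ge 5$, which forces a nonzero solution and thereby establishes Lemma~\ref{localTLemma} and hence Theorem~\ref{localSolvability} for large $p$ — is a routine inequality that I would handle immediately afterward (the crossover $(p-1)^7 > (p^2-1)(2\sqrt p+1)^7$ holds for all sufficiently large $p$, with the finitely many small primes dispatched by the computer check referenced in Lemma~\ref{Mpositivity}).
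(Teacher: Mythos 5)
Your proposal is correct and follows the same line of argument as the paper: isolate the trivial pair $(r_1,r_2)=(p,p)$ contributing $(p-1)^t/p^2$, and bound the remaining $p^2-1$ terms of the orthogonality identity by $(2\sqrt{p}+1)^t$ via Lemma~\ref{explicitWBound}. Your initial detour through the unrestricted sum $\sum_{x_i=1}^p$ before self-correcting to the restricted ranges matching $W_i$ is unnecessary but harmless, and does not change the substance of the proof.
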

\begin{proof}
\[
	M_t(p) = \frac{1}{p^2}\sum_{r_1=1}^p \sum_{r_2=1}^p \prod_{i=1}^t W_i(p,r_1,r_2).
\]
We have $W_i(p,p,p)=p-1$ and for $r_1$, $r_2$ not both $p$, $W_i(p,r_1,r_2) \leq 2\sqrt{p}+1$ by Lemma~\ref{explicitWBound}.  Thus
\[
	\left|M_t(p) - \frac{(p-1)^t}{p^2}\right| \leq \frac{1}{p^2} \sum_{r_1=1}^p \sum_{\substack{r_2=1 \\ \{r_1,r_2\} \neq \{p,p\}}}^p \prod_{i=1}^t (2\sqrt{p}+1)
\]
\[
	\leq \frac{1}{p^2}(p^2-1)(2\sqrt{p}+1)^t.
\]
So we have
\[
	M_t(p) \geq \frac{1}{p^2}\big((p-1)^t- (p^2-1)(2\sqrt{p}+1)^t\big).
\]
\end{proof}

Taking $t=7$, we get
\[
	M_7(p) \geq \frac{1}{p^2}\big((p-1)^7- (p^2-1)(2\sqrt{p}+1)^7\big).
\]
This gives that $M_7(p) > 0$ for $p > 40.58$.  This means that we now need only check that Lemma~\ref{localTLemma} holds for each prime smaller than 41.  This is now a finite number of cases to check and thus can be verified by computer.  In the following section, we note several techniques that may be employed to bring the computational difficulty of the task into the realm of feasibility, and in Appendix~\ref{codeAppendix} we provide Sage code for performing the computation.

It is worth noting that $t=7$ appears to only be required for $p=7$.  It seems highly probable that $t=5$ will suffice for all other primes; however, reducing $t$ to 5 weakens the bound of Lemma~\ref{MpBound} to requiring us to check all primes less than 1193, which would require more computation than is feasible, since even after the optimizations of Section~\ref{sec:computationalTechniques}, the algorithm checks $O(p^7)$ distinct forms for solvability to verify Lemma~\ref{MpBound} for all primes up through $p$.

\section{Computational Techniques}\label{sec:computationalTechniques}

First, we note that if every pair $U, V$ modulo $p$ can be represented by the form in $t_0$ variables, then every pair can be represented by $t$ variables for $t>t_0$.  So we will start our search with $t=3$ and store the forms that represent all pairs $(U, V)$ of residue classes mod $p$.  We then need only search higher values of $t$ for the forms that failed to represent all pairs of residue classes with a smaller $t$.

(The methods in this paragraph are closely modeled after those of~\cite{wooleySAEi}.)  By independently substituting $c_ix_i$ for each $x_i$, we can assume each $x_i$ is either 1 or a fixed quadratic nonresidue $c$ modulo $p$.  By rearranging and multiplying by $b^{-1}$ as needed, we can assume that $u_1, \ldots, u_r = 1$, $u_{r+1}, \ldots, u_t = c$ with $r \geq \lceil{t/2}\rceil$.  By multiplying the cubic equation by $v_1^{-1}$ and rearranging, we may assume $1 = v_1 \leq v_2 \leq \ldots \leq v_t$.  By substituting $-x_i$ for $x_i$ as needed, we can assume $1 \leq v_i \leq (p-1)/2$ for each $v_i$ without affecting the $u_i$.

As a final optimization, we note that if the form
\begin{equation}
	\begin{split}
		u_1x_1^2 + \ldots + u_tx_t^2 = U \pmod p\\
		v_1x_1^3 + \ldots + v_tx_t^3 = V \pmod p
	\end{split}
\end{equation}
 represents $p^2-1$ of the possible $p^2$ pairs of residue classes $(U, V)$ modulo $p$, then
\begin{equation}
	\begin{split}
		u_1x_1^2 + \ldots + u_{t+1}x_{t+1}^2 = U \pmod p\\
		v_1x_1^3 + \ldots + v_{t+1}x_{t+1}^3 = V \pmod p
	\end{split}
\end{equation}
will necessarily represent all $p^2$ residue classes, since $(u_{t+1}x_{t+1}^2, v_{t+1}x_{t+1}^3)$ must represent at least two distinct pairs of residue classes, so 
\begin{equation}
	\begin{split}
		u_1x_1^2 + \ldots + u_tx_t^2 = U - u_{t+1}x_{t+1}^2 \pmod p\\
		v_1x_1^3 + \ldots + v_tx_t^3 = V - v_{t+1}x_{t+1}^3 \pmod p
	\end{split}
\end{equation}
 will be solvable for some $(u_{t+1}, v_{t+1})$.  This turns out to be quite useful: a substantial number of forms represent exactly $p^2-1$ pairs of residue classes modulo $p$.

Using these techniques to minimize the computation needed, running the Sage code in Appendix~\ref{codeAppendix} verifies that Lemma~\ref{localTLemma} holds for $p < 41$.  This allows us to conclude the following unconditional form of Theorem~\ref{Spositivity}.

\begin{lemma}\label{unconditionalFrakSPositivity} $\mathfrak{S} > 0.$
\end{lemma}

\section{Conclusion}\label{sec:conclusion}

We have that $R(P) = \mathfrak{S}(Q)J(Q) + O(P^{s-5}(\log P)^{-E})$ by Lemma~\ref{singularDecomp}.  Lemma~\ref{unconditionalFrakSPositivity}, in conjuction with Lemma~\ref{Spositivity}, shows that $\mathfrak{S}(Q)>0$ uniformly over all $u_i$, $v_i$ satisfying the conditions of Theorem~\ref{mainThm} or Theorem~\ref{mainThmv2}. 

The singular integral $J(Q)$ is the same as the one Wooley obtains in the corresponding problem over the integers, so by Lemma 7.4 of \cite{wooleyDiagEq}, there exists a positive constant $C$ such that
\[
	J(Q) = CP^{s-5} + O(P^{s-5}Q^{-1/2}).
\]
In addition, we have the asymptotic upper bound $\mathfrak{S}(Q) \ll 1$ from Lemma~\ref{frakSConverges}.  So we have
\[
	R(P) = CP^{s-5} + O(P^{s-5} (\log P)^{-E})
\]
for $E > 0$, $C > 0$ uniformly.

Thus $R(P)$ is eventually positive.  This can only be true if there is a solution of (\ref{eq:system}) over the primes, so we can conclude Theorems~\ref{mainThm} and~\ref{mainThmv2}.

\section{Acknowledgements}
The author is greatly indebeted to Professor Robert Vaughan for suggesting the problem, for a great deal of guidance and assistance, and for many of the ideas of Sections~\ref{sec:minorArcs} and~\ref{sec:Fbound}.  The author also thanks Trevor Wooley, who suggested the approach taken in Lemma~\ref{theTrick}.

\appendix
\section{Sage Code}\label{codeAppendix}

Code: (SageMath 8.6)

\begin{Verbatim}[fontsize=\small]
for p in prime_range(5,41):
    
    # Find a quadratic non-residue modulo p
    for i in range(1,p):
        if i not in quadratic_residues(p):
            c = i
            break
    
    uv_done = []
    print("p = " + str(p))
    
    for t in range(3,8):
        u = [0] * t
        v = [0] * t
        for number_of_c in range(floor(t/2) + 1): # Set u
            for u_index in range(t):
                if u_index < t - number_of_c:
                    u[u_index] = 1
                else:
                    u[u_index] = c
            skip_v = False
            for v_counter in range(((p-1)/2)^(t-1)): # Set v
                v[0] = 1
                for v_index in range(1,t):
                    v[v_index] = floor(v_counter % ((p-1)/2)^(v_index) / ((p-1)/2)^(v_index-1)) + 1
                    if u[v_index] == u[v_index-1] and v[v_index] < v[v_index-1]:
                        skip_v = True
                if skip_v == True:
                    skip_v = False
                else:
                    # If removing the last coefficients yields a smaller form that
                    # has already passed, add this form to that list and continue
                    if (u[:t-1], v[:t-1]) in uv_done:
                        uv_done.append((deepcopy(u),deepcopy(v)))
                    else:
                        L = []
                        done = False
                        for i in range((p-1)^t):
                            if done:
                                break;
                            x = [None] * t
                            for j in range(t): # Set x
                                x[j] = floor(i % (p-1)^(j+1) / (p-1)^j) + 1
                            a = 0
                            b = 0
                            for k in range(t):
                                a = mod(a + u[k]*x[k]^2, p)
                                b = mod(b + v[k]*x[k]^3, p)
                            inL = False
                            for pair in L:
                                if (pair[0] == a and pair[1] == b):
                                    inL = True
                                    break;
                            # If the pair (a, b) has not already been represented 
                            # by this form, store that it can be
                            if inL == False: 
                                L.append((a,b))
                                if len(L) == p^2:
                                    done = True
                                    
                        # Uncomment this line to print information on each form
                        #print("u: " + str(u) + " v: " + str(v) + " " + str(len(L)))
                        
                        # If the form represents all pairs (a, b), add it to the list
                        if done: 
                            uv_done.append((deepcopy(u), deepcopy(v)))
                        # If the form represents all pairs (a, b) but one, add it
                        elif len(L) == p^2-1 and t < 7:
                            uv_done.append((deepcopy(u), deepcopy(v)))
                        else:
                            if t == 7:
                                print("u: " + str(u) + " v: " + str(v) + "fails.")
print("Search complete")
\end{Verbatim}

Output:
\begin{Verbatim}[fontsize=\small]
p = 5
p = 7
p = 11
p = 13
p = 17
p = 19
p = 23
p = 29
p = 31
p = 37
Search complete
\end{Verbatim}

\end{document}